\newtheorem{theorem}{Theorem}[section]
\newtheorem{definition}[theorem]{Definition}
\newtheorem{lemma}[theorem]{Lemma}
\newtheorem{proposition}[theorem]{Proposition}
\newtheorem{remark}[theorem]{Remark}
\numberwithin{equation}{section}
\journal{Physica D}
\begin{document}

\begin{frontmatter}
\title{Shape selection in non-Euclidean plates}
\author[az]{John A. Gemmer\corref{cor1}}
\ead{jgemmer@math.arizona.edu}
\author[az,bz]{Shankar C. Venkataramani}
\ead{shankar@math.arizona.edu}

\address[az]{Program in Applied Mathematics,University of Arizona, Tucson, AZ 85721, U.S.A.}
\address[bz]{Department of Mathematics, University of Arizona, Tucson, AZ 85721, U.S.A.}
\cortext[cor1]{Principal corresponding author}

\begin{keyword}
nonlinear elasticity of thin objects \sep geometry of hyperbolic surfaces \sep pattern formation \sep morphogenesis in soft tissue.
\end{keyword}

\begin{abstract} We investigate isometric immersions of disks with constant negative curvature into $\mathbb{R}^3$, and the minimizers for the bending energy, {\em i.e.} the $L^2$ norm of the principal curvatures over the class of $W^{2,2}$ isometric immersions. We show the existence of smooth immersions of arbitrarily large geodesic balls in $\mathbb{H}^2$ into $\mathbb{R}^3$. In elucidating the connection between these immersions and the non-existence/singularity results of Hilbert and Amsler, we obtain a lower bound for the $L^\infty$ norm of the principal curvatures for such smooth isometric immersions. We also construct piecewise smooth isometric immersions that have a periodic profile, are globally $W^{2,2}$, and numerically have lower bending energy than their smooth counterparts. The number
of periods in these configurations is set by the condition that the principal
curvatures of the surface remain finite and grow approximately exponentially with the radius of the disc.
We discuss the implications of our results on recent experiments on the mechanics of non-Euclidean plates.
\end{abstract}

\end{frontmatter}

\section{Introduction}

The differential growth of thin elastic sheets can generate highly non-trivial configurations such as the multiple generation of waves along the edge of a torn elastic sheet \cite{Sharon2002} and the wrinkled patterns on leaves \cite{UtpalNath02282003}. Recently, environmentally responsive gel disks of uniform thickness have been created in the laboratory that mimic this type of growth by differentially shrinking in the radial direction when thermally activated in a hot water bath \cite{Shaping-of-elastic-sheets-by-prescription-of-non-euclidean-metrics}. The rate of shrinkage can be controlled, allowing for the creation of regions in the disk that shrink at different rates upon activation. Rippling occurs in regions that shrink slower than the center of the disk while regions that shrink faster obtain a spherical dome like shape. Geometrically, these regions have positive and negative Gaussian curvature respectively and by tuning the composition of the gel, different curvatures can be selected. This type of growth can be used to generate a multitude of different shapes of varying geometries, but a mathematical framework that can accurately predict the equilibrium shapes is lacking.

Classically, thin elastic plates have been modeled by the F\"oppl -- von K\`arm\`an (FvK) equations that reduce the full three dimensional equations of elasticity to a coupled system of partial differential equations defined on the mid-surface of the plate \cite{Landau, foppl}. These equations are asymptotically valid in the small thickness regime if the thickness scales in a particular manner with external loadings \cite{CIARLET}, and different scalings lead to a hierarchy of different plate models that includes FvK \cite{MUELLER}. The FvK equations can be obtained as the variation of an energy functional. This energy is the sum of stretching and bending terms such that the ratio of bending to stretching energy scales like the thickness squared \cite{Landau}. Consequently, thin plates energetically prefer to bend out of the plane rather than stretch -- a phenomenon easily observed in stretched sheets \cite{Cerda-2003} -- and this could be the mechanism used by growing sheets to buckle out of the plane. 

Non-uniform growth is difficult to model using the FvK equations since it commonly generates structures with residual stresses \cite{Goriely-2005} and thus it is unclear what is an appropriate reference configuration from which strains can be measured. One technique for defining a reference configuration is to specify a two dimensional ``target metric'' $\mathbf{g}$  on the mid-surface of the sheet from which strains are measured  \cite{Theory-of-Edges-of-Leaves, Geometry-And-Elasticity-of-srips-and-flowers, B.Audoly2002, PhysRevLett.91.086105}. The metric $\mathbf{g}$ is a geometric quantity that locally determines all of the intrinsic properties of an abstract surface such as its Gaussian curvature and geodesics \cite{gray}. Thus, we are naturally led to consider the problem of finding isometric immersions of $\mathbf{g}$ into $\mathbb{R}^3$  \cite{2001JPhA...3411069N}. But, the problem is that in general isometric immersions of $\mathbf{g}$ may not exist or be unique and this model does not incorporate important three dimensional effects such as a mechanism for penalizing highly bent configurations. 

Instead of modeling non-uniform growth as isometric immersions, a ``non- Euclidean" elastic energy functional has been recently introduced that incorporates $\mathbf{g}$ and reduces to the FvK functional when $\mathbf{g}$ is the identity matrix \cite{ERAN}. This functional has the same energy scaling as FvK with the  stretching energy measuring $L^2$ deviations of the immersion from $\mathbf{g}$.  As with the FvK functional, if the thickness is sufficiently small the stretching energy dominates and we expect the elastic sheet to bend out of the plane to match the metric. In particular, if $W^{2,2}$ isometric immersions exist then in the limit of vanishing thickness minimizers of the non-Euclidean FvK functional converge to a minimizer of the bending energy over the class of $W^{2,2}$ isometric immersions of $\mathbf{g}$ \cite{Gamma-limit}. Thus in this model the particular isometric immersion is selected by the bending energy.

An alternative is to decompose the deformation of the sheet into a growth tensor and an elastic response, and define the elastic energy in these terms. Lewicka, Mahadevan and Pakzad \cite{gamma_convergence_smallslopes} have shown that the two approaches are essentially equivalent in the limit that the thickness of the sheet vanishes.

In this paper we study isometric immersions of $\mathbf{g}$ in the disk geometry when $\mathbf{g}$ specifies a constant negative Gaussian curvature $K_0$. Experimentally, disks of constant negative curvature obtain an equilibrium configuration with a periodic profile of one wavelength with the number of nodes increasing with decreasing thickness \cite{an-experimental-study-of-shape-transitions-and-energy-scaling-in-thin-non-euclidean-plates}. Consequently, we will look for low bending energy test functions that match the target metric and have a periodic profile.

Finding such test functions corresponds to the classical problem of finding isometric immersions of the hyperbolic plane $\mathbb{H}^2$ into $\mathbb{R}^3$. Hilbert \cite{Hilbert} proved that there are no real analytic isometric immersions of the entire hyperbolic plane into $\mathbb{R}^3$,  but this does not exclude the possibility of the existence of isometric immersions with weaker smoothness criteria. In fact, the Nash-Kuiper theorem states that through the technique of convex integration there exists $C^1$ isometric immersions of $\mathbb{H}^2$ \cite{Nash-1954, Kuiper-1955}. But, such an immersion would be too rough since the bending energy requires the surface to be at least weakly second differentiable. Furthermore, by Efimov's theorem, we know that there are no $C^2$ isometric immersions of $\mathbb{H}^2$ \cite{Efimov-1964}. The admissible surfaces with finite elastic FvK energy are $W^{2,2}$ and this motivates the search for $W^{2,2}$ isometric immersions which lie between $C^1$ and $C^2$ in smoothness. 

 In general finding an isometric immersion of an arbitrary metric is a non-trivial problem. But, by Minding's theorem \cite{Minding}, which states that any two abstract surfaces having the same constant Gaussian curvature are locally isometric, we can cut out subsets of surfaces of constant negative curvature which will be isometric to pieces of $\mathbb{H}^2$. In particular, if $S$ is a surface of constant negative curvature and $p\in S$ then $U=\{q\in S:d(p,q)<R\}$ is isometric to a disk of radius $R$ in $\mathbb{H}^2$, provided $U$ is smooth. Furthermore, it was proven by Amsler that the singularities where $S$ fails to be smooth form curves on $S$ \cite{Amsler}. These curves form a boundary beyond which no disks locally isometric to $\mathbb{H}^2$ can be cut out of the surface.

This paper is organized as follows. In Sec.~\ref{sec:model} we present the mathematical model for thin elastic sheets with a specified metric $\mathbf{g}$ in the form of a variational problem with a FvK type functional. We show that for isometric immersions the functional reduces to the Willmore functional and thus in the vanishing thickness limit if an isometric immersion exists the minimizing configurations should converge to a minimizer of the Willmore energy over the class of $W^{2,2}$ isometric immersions \cite{Gamma-limit}.

In Sec.~\ref{sec:hyperbolic} we review some of the theory of hyperbolic surfaces that will allow us to construct isometric immersions in $\mathbb{R}^3$. In particular, we make use of Chebyshev nets (C-nets) which are local parameterizations of a surface whose coordinate curves form quadrilaterals on the surface with opposite sides having equal length \cite{gray}. Originally, Chebyshev used C-nets to describe the arrangement of fibers in clothing and he showed that a sphere must be covered by at least two cuts of cloth \cite{chebyshef-1878} (see \cite{McLachlan-1994} for a modern translation). As evidenced by the sphere, C-nets in general are only local parameterizations of a surface and a global C-net exists if the integral of the magnitude of the Gaussian curvature over a quadrilateral is less than $2\pi$ \cite{Hazzidikas-1880}. C-nets are useful parameterizations in that they lead to a natural way to discretize a surface \cite{Bobenko-1999} and projections of C-nets onto paper retain the three dimensional properties of the surface \cite{Koenderink-1998}. Remarkably, for surfaces with constant negative curvature, every sufficiently smooth isometric immersion generates a C-net on the surface obtained by 
taking the asymptotic curves as the coordinate curves and conversely every C-net on a surface of constant negative curvature generates an immersion whose asymptotic curves are the coordinate lines \cite{gray}. The extrinsic and intrinsic geometry of the particular immersion is entirely determined by the angle $\phi$ between the asymptotic curves and a parameterization is a C-net if and only if $\phi$ is a solution of the sine-Gordon equation \cite{gray}. Thus, solutions of the sine-Gordon equation generate surfaces of negative curvature and the corresponding immersion will fail to be smooth when $\phi$ is an integer multiple of $\pi$.

In Sec.~\ref{sec:pseudosphere} we show that isometric immersions of geodesic discs in the hyperbolic plane can by constructed by cutting out subsets of a classical hyperbolic surface, the pseudosphere. This is contrary to the claims in \cite{0295-5075-86-3-34003} where the experimentally observed buckling and refinement in wavelength with decreasing thickness is attributed to the non-existence of smooth isometric immersions. These configurations have very large bending energy even away from the singular curve and do not have the rotational $n$-fold symmetry observed in experiments \cite{an-experimental-study-of-shape-transitions-and-energy-scaling-in-thin-non-euclidean-plates}. 

A natural question is the connection between this existence result and the non-existence result of Holmgren \cite{Holmgren} stating that given a ``local" immersion of an open set in $\mathbb{H}^2$ into $\mathbb{R}^3$, there is a maximal domain of finite diameter on which this immersion can be extended to a smooth global immersion with one of the principal curvatures diverging as we approach the boundary. In Sec.~\ref{sec:minimax}, we obtain a quantitative version of this result, by showing numerically that for isometric immersions of a Hyperbolic disk of radius $R$ the (nondimensional) maximum principal curvature of the immersion is bounded from below by a bound which grows exponentially in the (nondimensional) radius. We also identify the surfaces which realize this bound, and they turn out to be disks cut from Hyperboloids of revolution with constant negative curvature.

In Sec.~\ref{sec:small_slopes}, we use the small slopes approximation to find approximate isometric immersions with a periodic profile. The small-slopes approximation has been widely used to model torn sheets and growing tissues \cite{PhysRevLett.91.086105, Dervaux(2008), HaiyiLiang12292009}. In this setting the problem of finding immersions of $\mathbf{g}$ reduces to solving a hyperbolic Monge-Ampere equation. The simplest solutions are a family of quadratic surfaces with two asymptotic lines intersecting at the origin. We then periodically extend the surface bounded between the two lines to construct periodic shapes that have the same scaling of amplitude versus period observed experimentally \cite{an-experimental-study-of-shape-transitions-and-energy-scaling-in-thin-non-euclidean-plates}. The two wave configuration is the global minimum of the bending energy over the class of isometric immersions and we prove that with decreasing thickness the configurations should converge to this shape. 

  In Sec.~\ref{sec:amsler} we extend the surfaces constructed from solutions to the Monge-Ampere equation by studying hyperbolic surfaces with two asymptotic lines. These surfaces are called Amsler surfaces, named after Henri Amsler who studied them in his paper on the nature of singularities of hyperbolic surfaces \cite{Amsler, Bobenko}. Amsler surfaces are generated by a similarity transformation of the sine-Gordon equation which transforms it to a Painlev\'{e} III equation in trigonometric form \cite{Amsler, Bobenko}. From these surfaces, we can create periodic shapes as we did with the small slopes approximations that are energetically preferred over corresponding disks cut from the pseudosphere and hyperboloids. The bending energy of these surfaces is concentrated in small regions near the singular curves and the lines we are taking the periodic extension. Furthermore for each $n\geq 2$, there is a radius $R_n\sim \log(n)$ such that the $n$-periodic Amsler surfaces only exist for a radius $0<R<R_n$. This gives a natural geometric mechanism for the refinement of the wavelength of the buckling pattern with increasing radius of the disk.  Finally, we present a concluding discussion in Sec.~\ref{sec:discuss}.

%
%
\section{Model} \label{sec:model}

We model the stress free configuration of a thin elastic disk of thickness $t$ and radius $R$ that has undergone differential growth as an abstract Riemannian manifold $D$ with metric $\mathbf{g}$. More specifically, if we choose a coordinate system for $D$ so that $0$ lies at the center of the disk then $D=\{p:d(p,0)<R\}$, where $d$ is the distance function induced by $\mathbf{g}$. A configuration of the sheet in three dimensions is then given by a mapping $\mathbf{x}:D\rightarrow
\mathbb{R}^3$, where $U=\mathbf{x}(D)$, the image of the mapping, is taken to be the mid-surface of a sheet. The equilibrium configurations of the sheet are extrema for the elastic energy functional:
\begin{equation} \label{energy}
\mathcal{E}[\mathbf{x}]=\int_{D}\|D\mathbf{x}^T\cdot D\mathbf{x}-\mathbf{g}\|^2\,dA+t^2\int_{D}\|4H^2-K\|^2\,dA,
\end{equation}
where $H$ is the mean curvature of the surface $U$, $K$ is the Gaussian curvature, $D\mathbf{x}$ is the Jacobian matrix of partial derivatives of $\mathbf{x}$, and $dA$ is the area element on $D$ \cite{Shaping-of-elastic-sheets-by-prescription-of-non-euclidean-metrics}. This functional is an asymptotic expression for a full three dimensional elastic energy functional per unit thickness \cite{Gamma-limit}. The first integral measures the stretching energy and vanishes when $\mathbf{x}$ is an isometric immersion while the second integral measures bending energy and vanishes when the image of $\mathbf{x}$ is a plane.

This functional is a FvK type since it is the sum of stretching and bending energies such that ratio of bending to stretching energy scales like $t^2$ \cite{CIARLET}.  If $\mathbf{g}$ is not Euclidean, i.e. if the Gaussian curvature corresponding to $\mathbf{g}$ is non-zero, then both of these terms cannot simultaneously vanish in three dimensions and consequently the equilibrium configuration will not be able to reduce all of its elastic energy even in the absence of external forces. 

In this paper we will focus on (\ref{energy}) when $\mathbf{g}$ generates a constant negative Gaussian curvature $K_0 < 0$. $K_0$ is the {\em target Gaussian curvature}, and we can always nondimensionalize such that $K_0 = -1$, {\em i.e.}, we chose the units for length as $(-K_0)^{-1/2}$. If the surface is (initially) parameterized by $u,v$ then $u^{\prime}=\sqrt{-K_0}u$, $v^{\prime}=\sqrt{-K_0}v$, are the dimensionless parameterization variables and $R^{\prime}=\sqrt{-K_0}R$ is the dimensionless radius. Therefore, if we can find a configuration $\mathbf{x}$ such that $D\mathbf{x}^TD\mathbf{x}-\mathbf{g}=0$ then, the Gaussian curvature of the immersion $K=K_0=-1$ and (\ref{energy}) reduces to the following equivalent elastic energy functional
\begin{equation}\label{bending-energy-general}
\mathcal{B}[\mathbf{x}]=\int_D(k_1^2+k_2^2)\,dA,
\end{equation}
where $k_1$ and $k_2$ are the principal curvatures of $U$. Since $k_1k_2 = K = -1$, this energy is equivalent to the Willmore energy \cite{willmore}. We will refer to $\mathcal{B}$ as the (normalized) {\em bending energy} of the configuration $\mathbf{x}$, although, strictly speaking, the bending energy is the quantity $t^2 \mathcal{B}$ and not the quantity $\mathcal{B}$ itself.

Since we are integrating over a geodesic circle it is convenient to work in geodesic polar coordinates, which are the natural analogs of the radial and polar angle coordinates on a Riemannian manifold \cite{gray}. In these coordinates $U$ is parameterized by the coordinates $(r,\Psi)$ which map to a point $q\in U$ satisfying $d(q,p)=r$ and $q$ lies on a geodesic whose initial velocity vector makes an angle $\Psi$ with respect to a basis of the tangent plane at $p$. In this coordinate system the metric $\mathbf{g}$ has components 
\begin{equation}\label{HyperbolicMetric}
g_{11}=1, \text{ } g_{12}=g_{21}=0, \text{ and } g_{22}=\sinh(r),
\end{equation} 
and equation (\ref{bending-energy-general}) takes the form:
\begin{equation}\label{bending-energy-polar}
\mathcal{B}[\mathbf{x}]=\int_0^{2\pi}\int_0^R \sinh(r)(k_1^2+k_2^2)\,dr\,d\Psi.
\end{equation}

Lewicka and Pakzad \cite{Gamma-limit} have shown that, for immersions $\mathbf{x}$ in the space  $W^{2,2}(D,\mathbb{R}^3)$,
\begin{equation} \label{Gamma-limit-energy}
\Gamma-\lim_{t \rightarrow 0} \,\,t^{-2} \mathcal{E}[\mathbf{x}] = \begin{cases}  \mathcal{B}[\mathbf{x}], & D\mathbf{x}^T\cdot D\mathbf{x} = \mathbf{g} \\
+\infty, & \mbox{otherwise}
\end{cases}
\end{equation}
where $D\mathbf{x}^T\cdot D\mathbf{x} = \mathbf{g}$ implies that $\mathbf{x}$ is an isometric immersion. Therefore, in the limit as the thickness vanishes, equation (\ref{bending-energy-polar}) is the appropriate functional with the set of admissible functions $\mathcal{A}$ being sufficiently regular isometric immersions of the disk of radius $R$ in $\mathbb{H}^2$. In particular, we need $\mathcal{B}[\mathbf{x}] < \infty$, so the principal curvatures are $L^2$ functions on the disk $D$, and we need to consider $W^{2,2}$ immersions $\mathbf{x}:D \rightarrow \mathbb{R}^3$.
 
%
%
\section{Theory of Hyperbolic Surfaces} \label{sec:hyperbolic}

In this section we show how isometric immersions of $D$ can be cut out of a surface $S$ with constant negative Gaussian curvature. Let $\mathbf{x}:D\rightarrow S$ be a parametrization of $S$ and for $p\in S$ consider the set $U=\{q\in S,d(p,q)<R\}$. By Minding's theorem \cite{Minding} as long as $U$ is smooth it can form an isometric immersion of $D$. To construct $U$ we need to calculate the geodesics on $S$ and also to determine $\mathcal{B}[\mathbf{x}]$ we need to calculate the principal curvatures on $S$.

To calculate these quantities recall the following facts and definitions from the theory of hyperbolic surfaces (see pgs. 683-692 in \cite{gray}). An asymptotic curve is a curve in $S$ whose normal curvature vanishes everywhere. Moreover, since the principal curvatures $k_1$ and $k_2$ of a hyperbolic surface have opposite signs it follows that through each point $p\in U$ there are exactly two asymptotic curves that intersect at $p$. Consequently, we can locally parameterize any hyperbolic surface by its asymptotic lines. Such a parameterization is a C-net and has a metric $\mathbf{g}^{\prime}$ with the following components
\begin{equation} \label{Tcebyshef-Metric}
g^{\prime}_{11}=g^{\prime}_{22}=1 \text{ and } g^{\prime}_{21}=g^{\prime}_{21}=\cos(\phi),
\end{equation}
where $\phi$ is the angle between the asymptotic curves at each point in $S$. Therefore, all of the intrinsic properties of the surface are expressible in terms of $\phi$, and for this reason $\phi$ is called the generating angle.

 The condition that there must be two distinct asymptotic lines at each point in $U$ implies that $\phi$ must satisfy the constraint 
\begin{equation}\label{angle-constraint}
0<\phi(u,v)<\pi.
\end{equation} 
Furthermore, it follows from Brioschi's formula \cite{gray} that $\phi$ must satisfy the sine-Gordon 
equation
\begin{equation}
\frac{\partial^2 \phi}{\partial u \partial v} =-K\sin(\phi), \label{Sine-Gordon}
\end{equation}
and thus solutions to the sine-Gordon satisfying (\ref{angle-constraint}) generate hyperbolic surfaces. In fact Hilbert proved that there is no smooth immersion of $\mathbb{H}^2$ by showing that there is no smooth solution to (\ref{Sine-Gordon}) that satisfies (\ref{angle-constraint}) in the entire $u\--v$ plane. The points on the surface where $\phi=n\pi$ are precisely the boundaries where the immersion fails to be smooth.

To determine the geodesics on $S$ we need to calculate the Christoffel symbols. They are intrinsic quantities determined by the metric and hence by $\phi$:
\begin{equation} \begin{array}{ll} \label{christoffel-symbols}
\Gamma_{11}^1=\cot(\phi)\frac{\partial \phi}{\partial u}, &  \Gamma_{11}^2=-\csc(\phi)\frac{\partial \phi}{\partial u}\\
\Gamma_{12}^1=0,  & \Gamma_{12}^2=0,\\  
\Gamma_{22}^1=-\csc(\phi)\frac{\partial \phi}{\partial v} &  \Gamma_{22}^2=\cot(\phi)\frac{\partial \phi}{\partial v},
\end{array}
\end{equation}
which gives us the following geodesic equations:
\begin{equation} \label{geodesic-equations}
\begin{array}{c}
\frac{d^2u}{dt^2}+\left(\frac{du}{dt}\right)^2\cot(\phi)\frac{\partial \phi}{\partial u}-\left(\frac{dv}{dt}\right)^2\csc(\phi)\frac{\partial \phi}{\partial v}=0,\\
\frac{d^2v}{dt^2}-\left(\frac{du}{dt}\right)^2\csc(\phi)\frac{\partial \phi}{\partial u}+\left(\frac{dv}{dt}\right)^2\cot(\phi) \frac{\partial \phi}{\partial v}=0.
\end{array}
\end{equation}

The extrinsic quantities of $S$ are also expressible in terms of $\phi$. The components of the second fundamental form $\mathbf{h}$ are given by
\begin{equation}
h_{11}=h_{22}=0 \text{ and } h_{12}=h_{21}=\pm \sin(\phi)
\end{equation}    
and the principal curvatures are
\begin{equation}
 k_1^2=\tan^2(\phi/2) \text{ and } k_2^2=\cot^2(\phi/2). \label{principal-curvatures}
\end{equation}
Consequently, since  $U\subset S$ is an isometric immersion of $D$ we have that equation (\ref{bending-energy-polar}) can be expressed entirely in terms of $\phi$:
\begin{equation}\label{bending-energy}
\mathcal{B}[\mathbf{x}]=\int_0^{2\pi}\int_0^R \sinh(r)(\tan^2(\phi/2)+\cot^2(\phi/2))\,dr\,d\Psi.
\end{equation}

We conclude this section by pointing out that a closely related parameterization of $S$, that is sometimes easier to make calculations with, is a parameterization by curves of vanishing curvature or a C-net of the second kind \cite{gray}. If $\mathbf{x}$ is a C-net, then a C-net of the second kind $\mathbf{y}$ is given by $\mathbf{y}(\eta,\xi)=\mathbf{x}\left(\frac{\eta+\xi}{2},\frac{\eta-\xi}{2}\right)$. The components of the metric $\mathbf{g}^{\prime}$ corresponding to $\mathbf{y}$ are
\begin{equation} \label{Tchebyshef-secondkind-metric}
g_{11}^{\prime}=\cos^2(\theta),\,g_{12}^{\prime}=g_{21}^{\prime}=0,\,g_{22}^{\prime}=\sin^2(\theta).
\end{equation}
Since $g_{12}^{\prime}=g_{21}^{\prime}=0$, the parameter curves of this parameterization are orthogonal and in fact they bisect the asymptotic curves. Therefore, $\theta=\phi/2$ and thus we can determine the generating angle by such a parameterization.

%
%
\section{Elastic Energy of the Pseudosphere} \label{sec:pseudosphere}

To illustrate the process of calculating the elastic energy of an isometric immersion of $D$ we consider a commonly known hyperbolic surface, the pseudosphere. The pseudosphere is given by the following C-net of the second kind \cite{gray}
\begin{equation}
\mathbf{y}(\eta,\xi)=\left(\frac{\cos(\xi)}{\cosh(\eta)},\frac{\sin(\xi)}{\cosh(\eta)},\eta-\tanh(\eta)\right).
\end{equation}
The mapping is singular on the curve $\eta=0$, which is where the surface fails to be a smooth immersion, and consequently we will only consider the ``upper half" $\eta>0$ of the pseudusphere.  Furthermore, since the sheets we are modeling have the topology of a plane and not a tubular surface we will think of the pseudosphere as a multi-sheeted surface. 

Directly calculating, the metric coefficients are given by the following simple expressions:
\begin{equation}
g_{11}^{\prime}=\tanh^2(\eta),\, g_{22}^{\prime}=\frac{1}{\cosh^2(\eta)}\text{ and } g_{12}^{\prime}=g_{21}^{\prime}=0.
\end{equation}
Consequently, by equation (\ref{Tchebyshef-secondkind-metric}) the generating angle has the form
\begin{equation}
\phi(\eta,\xi)=4\arctan(e^{-\eta}),
\end{equation}
and thus by (\ref{principal-curvatures}) the principal curvatures are
\begin{equation}
k_1^2=\tan^2(2\arctan(e^{-\eta}))=\frac{1}{\sinh^2(\eta)} \text{ and } k_2^2=\cot^2(2\arctan(e^{-\eta})=\sinh^2(\eta).
\end{equation}
Therefore, by equation (\ref{bending-energy}) the elastic energy of a disk $U$ centered at $(\eta_0,0)$ and lying in the pseudosphere is simply
\begin{equation}\label{ps-energy}
\mathcal{B}[\mathbf{y}]=\int_{0}^{2\pi}\int_0^{R}\sinh(r) \left(\sinh^2(\eta(r,\Psi))+\frac{1}{\sinh^2(\eta(r,\Psi))}\right)\,dr\,d\Psi.
\end{equation}  

Now, to determine how $\eta$ depends on $r$ and $\Psi$ and the shape of a disk $U$ we need to calculate the arclength of geodesics on the pseudosphere. The arclength of a curve $\xi(\eta)$ starting at $(\eta_0,0)$ and terminating at $(\eta_f,\xi_f)$ is given by the functional
\begin{equation} \label{arclength}
L[\xi(\eta)]=\int_{\eta_0}^{\eta_f}\sqrt{\tanh^2(\eta)+\sinh^2(\eta)\left(\frac{d\xi}{d\eta}\right)^2}\,d\eta
\end{equation}
and the geodesics that extremize this functional are given implicitly by
\begin{equation} \label{pseudo-geodesic-implicit}
\cosh^2(\eta)+(\xi+C)^2=D,
\end{equation}
where $C$ and $D$ are determined by the condition that the geodesic passes through $(\eta_0,0)$ and $(\eta_f,\xi_f)$.

Define the function $\mathcal{L}(\eta,\xi)$ to be arclength of the geodesic starting at $(\eta_0,0)$ and terminating at $(\eta,\xi)$. Since the geodesics fail to be a function of $\eta$ at the critical value $\left(\eta^{*}=\text{arccosh}(\sqrt{D}),\xi^{\star}=\sqrt{\cosh^2(\eta_0)-\cosh^2(\eta_f)}\right)$ the arclength of a geodesic will have to be computed over both branches of a square root. Assuming that $\eta_0<\eta$, if $\eta_0>\eta$ then $\mathcal{L}(\eta,\xi)$ can be computed by switching $\eta_0$ and $\eta$, we have by equation (\ref{arclength}) that
\begin{equation} \label{L(m,n)}
\mathcal{L}(\eta,\xi)=\left\{\begin{array}{ccc}
\int_{\eta_0}^{\eta}\sqrt{\frac{D\tanh^2(t)}{D-\cosh^2(t)}}\,dt &\text{ if } & 0<\xi^2< (\xi^*)^2\\
\int_{\eta_0}^{\eta^*}\left(\sqrt{\frac{D\tanh^2(t)}{D-\cosh^2(t)}}+\sqrt{\frac{D\tanh^2(t)}{D-\cosh^2(t)}}\right)\,dt &\text{ if } & \xi^2>(\xi^*)^2\\
\ln\left(\frac{\cosh(\eta)}{\cosh(\eta_0)}\right) & \text{ if } &  \xi=0 
 \end{array}\right..
\end{equation} 
By making the substitution $z=\text{sech}(t)$, and taking into account the case $\eta_0>\eta$, the integrals can be explicitly evaluated. The contour $\mathcal{L}(\eta,\xi)=R$ forms the boundary of the disk $U$, (see figure \ref{ps:pseudospherecut}).
\begin{figure}[htp] 
\begin{center}
\subfigure[]{
\includegraphics[width=.45\textwidth]{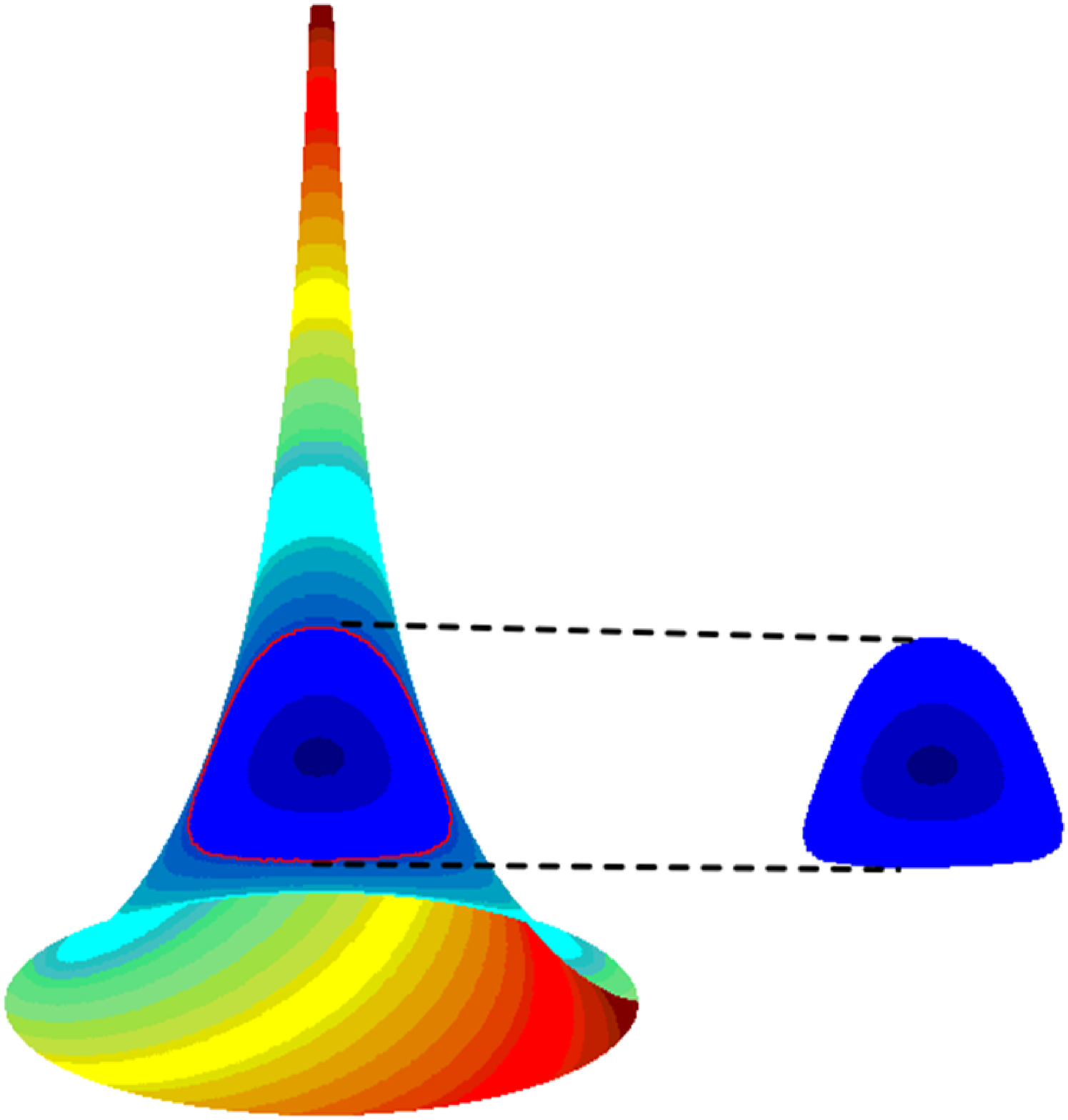}
\label{ps:pseudospherecut}
}
\subfigure[]{
\includegraphics[width=.45\textwidth]{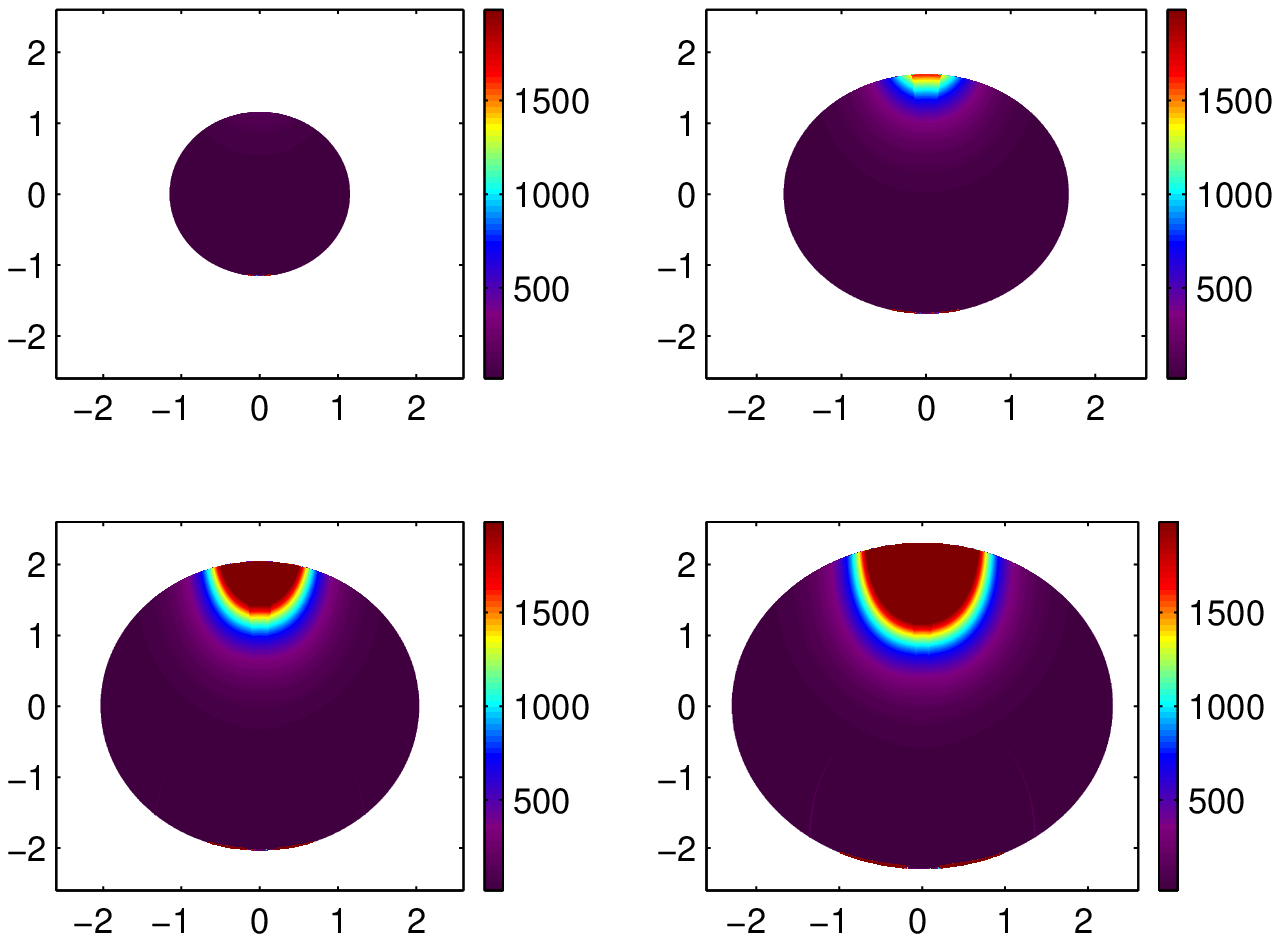}
\label{ps:psdiscs}
}

\subfigure[]{
\includegraphics[width=.8\textwidth]{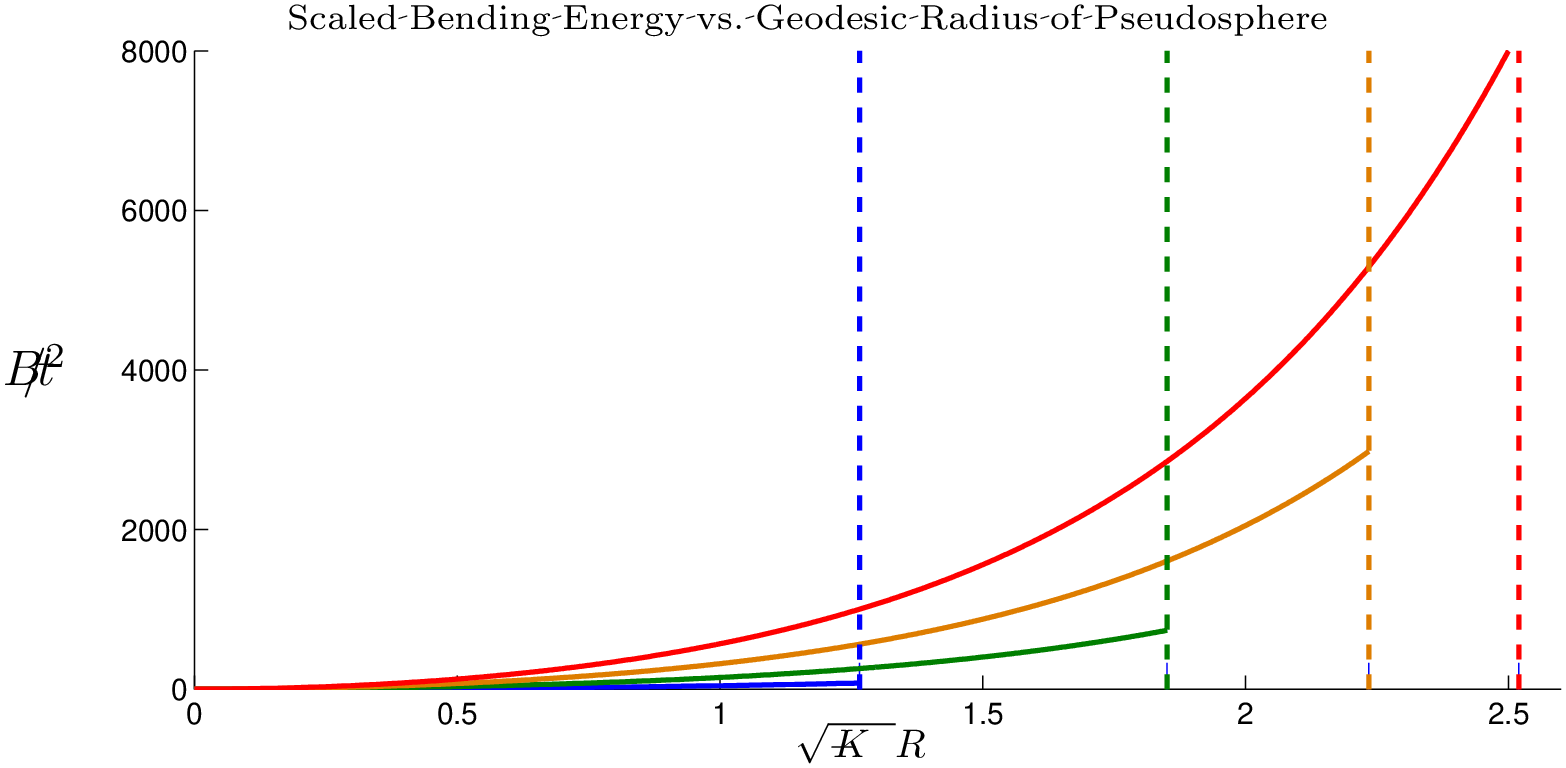}
\label{ps:energyplot}
}
\end{center}
\caption{\subref{ps:pseudospherecut} The pseudosphere colored by the geodesic distance from an arbitrary point $p$. Geodesic disks of constant negative curvature can be formed from the contour data and correspond to isometric immersions of geodesic disks in the hyperbolic plane. \subref{ps:psdiscs} A representation of the geodesic disks colored by $k_1^2+k_2^2$ in which the radius and polar angle correspond to the geodesic radius $r$ and polar angle $\Psi$. These disks are centered at $\eta_0=1.94,2.54, 2.92, 3.21$ respectively. These values where chosen for comparison with the periodic Amsler surfaces in figure \ref{amslerenergy}. \subref{ps:energyplot} Plot of the scaled bending energy of these disks as a function of the geodesic radius. We can see from figure \subref{ps:psdiscs} that for moderately sized disks the bending energy is large near the top of the disk leading to the very large energies. 
}
\label{psfigure}
\end{figure}

Furthermore, the geodesics on the pseudosphere itself are given by $
\mathbf{x}(\eta=\text{arccosh}\left(D-(\xi+C)^2\right),\xi)$.
The tangent vector $\frac{d\mathbf{x}(\eta(\xi),\xi)}{d\xi}$ evaluated at $\eta=0$ gives us the tangent vector of this curve which we can use to calculate the angle a geodesic makes with respect to the basis $\left(\frac{\partial \mathbf{x}(\eta,\xi)}{\partial \xi}, \frac{\partial \mathbf{x}(\eta,\xi)}{\partial \eta}\right)$ of the tangent plane at the center of the disk by taking inner products. By making the substitution $D=\cosh(\eta_0)^2+C^2$, this calculation gives us that the angle $\Psi$ satisfies  
\begin{equation}
\cos(\Psi)=\frac{\sqrt{2}\text{csch}(\eta_0)}{1+2C^2+\cos(2\xi_0)}.
\end{equation}
Consequently, by specifying a value for $\Psi$ we can calculate $C$ and therefore use equation (\ref{L(m,n)}) to determine $\eta$ for a particular value of $r$ and $\Psi$. This allows us to color the disk by bending energy density (see figure \ref{ps:psdiscs}) and numerically integrate (\ref{ps-energy}) over the disk to determine how the bending energy scales with $R$ for various values of $m_0$ (see figure \ref{ps:energyplot}).  

Now, a disk centered at $(\eta_0,0)$ cannot be made arbitrarily large since it will eventually meet the singular curve $\eta=0$. We can prove that the largest disk on the pseudosphere with this center has a radius of $R_{\eta_0}=\ln(\cosh(\eta_0))$. To prove this note that the curve $\xi=0$ is a geodesic, it trivially solves the Euler-Lagrange equation, and therefore
\begin{equation}
\mathcal{L}(0,0)=\int_0^{\eta_0}\tanh(u)\,d\eta=\ln(\cosh(\eta_0)).
\end{equation}
Now, let $\xi(\eta)$ be another geodesic that terminates at a point $(0,\eta^*)$ on the boundary. Then,
\begin{align*}
\mathcal{L}(0,\xi^*)&=\int_0^{\eta_0}\sqrt{\tanh^2(\eta)+\sinh^2(\eta)\left(\frac{d\xi}{d\eta}\right)^2}\,d\xi\\
&>\int_0^{\eta_0}\tanh(\eta)\,d\eta=\ln(\cosh(\eta_0)).
\end{align*}
Consequently, the contour $\mathcal{L}(\eta,\xi)=\ln(\cosh(\eta_0))$ meets the curve $\eta=0$ at only one point and thus $\ln(\cosh(\eta_0))$ is the radius of the largest possible disk centered at $(\eta_0,0)$. Therefore, we can see that for arbitrary $R$ we can find a value $\eta_0$ such that $D$ is isometric to $U$, and thus by varying the center of the disk we can create arbitrarily large stretching free configurations. This proves the following proposition
\begin{proposition}
Let $D$ be a disk of radius $R$ in the hyperbolic plane. There exists a smooth isometric immersion $\mathbf{x}:D\rightarrow U\subset \mathbb{R}^3$ such that $U$ is a subset of the pseudosphere.
\end{proposition}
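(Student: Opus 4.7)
The plan is to reduce the existence of a smooth isometric immersion of the hyperbolic disk $D$ of radius $R$ to the existence of a smooth geodesic disk of radius $R$ lying entirely in the regular part $\eta>0$ of the pseudosphere. Once such a geodesic disk is exhibited, Minding's theorem, invoked at the start of Section \ref{sec:hyperbolic}, immediately gives the desired isometric immersion, since both surfaces have constant Gaussian curvature $-1$ and the pseudosphere is smooth away from the cuspidal curve $\eta=0$.

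First, I would identify the meridian $\xi\equiv 0$ through the proposed center $(\eta_0,0)$ as a distance-minimizing geodesic from $(\eta_0,0)$ down to the singular curve. This is either read off from the reflection symmetry $\xi\mapsto -\xi$ of the parametrization $\mathbf{y}(\eta,\xi)$ (whose fixed-point set is totally geodesic), or checked directly as a critical point of the arclength functional in equation (\ref{arclength}). A direct integration gives $\mathcal{L}(0,0)=\ln(\cosh(\eta_0))$, exactly as computed in the excerpt preceding the proposition.

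Second, I would establish that this meridian is in fact the \emph{shortest} path from $(\eta_0,0)$ to the singular curve $\eta=0$. The key pointwise estimate is
\[
\sqrt{\tanh^2(\eta)+\sinh^2(\eta)\,(\xi')^2}\ \ge\ \tanh(\eta),
\]
with equality in the integrated version only when $\xi'\equiv 0$. Integrating from $\eta=0$ to $\eta=\eta_0$ along any competitor yields length strictly greater than $\ln(\cosh(\eta_0))$, which is precisely the variational argument already sketched in the excerpt. This proves that the open geodesic ball of radius $R_{\eta_0}:=\ln(\cosh(\eta_0))$ centered at $(\eta_0,0)$ is disjoint from the singular curve, and hence lies in the smooth locus $\eta>0$.

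Third, given arbitrary $R>0$, I choose $\eta_0$ so that $R<R_{\eta_0}$; for instance any $\eta_0>\operatorname{arccosh}(e^R)$ works, since $R_{\eta_0}\to\infty$ as $\eta_0\to\infty$. The open geodesic ball of radius $R$ around $(\eta_0,0)$ then lies in the smooth part of the pseudosphere, and by Minding's theorem it is isometric to the geodesic ball of radius $R$ in $\mathbb{H}^2$. Precomposing the pseudosphere parametrization with this local isometry produces the required smooth isometric immersion $\mathbf{x}:D\to\mathbb{R}^3$.

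The main obstacle I anticipate is ensuring that the geodesic ball on the pseudosphere is genuinely a smooth \emph{immersion} of $D$, rather than a map with folds or self-intersections. For sufficiently large $\eta_0$ the exponential map from $(\eta_0,0)$ may fail to be injective due to geodesics winding around in the $\xi$ direction, but the statement only demands an immersion. This can be handled by passing to the universal cover, i.e.\ by viewing the pseudosphere as the multi-sheeted surface mentioned earlier in the section, on which the exponential map is a local diffeomorphism on the full ball of radius $R_{\eta_0}$. Verifying this non-degeneracy of the exponential map (equivalently, the absence of conjugate points within $R_{\eta_0}$, which is automatic since the curvature is $-1$) completes the argument.
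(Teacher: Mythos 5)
Your proposal is correct and follows essentially the same route as the paper: identify the meridian $\xi=0$ as the minimizing geodesic from $(\eta_0,0)$ to the singular curve via the pointwise bound $\sqrt{\tanh^2(\eta)+\sinh^2(\eta)(\xi')^2}\geq\tanh(\eta)$, conclude the largest smooth geodesic disk centered there has radius $\ln(\cosh(\eta_0))$, let $\eta_0\to\infty$, and invoke Minding's theorem. Your added remarks on the multi-sheeted cover and absence of conjugate points only make explicit what the paper handles informally earlier in the section.
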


%
%
\section{Lower Bounds for the Curvature of Smooth Isometric immersions of Hyperbolic disks}
\label{sec:minimax}

In this section we explore numerically how the principal curvatures of smooth isometric immersions scale with the size of the disk. In particular, we show that the maximum principal curvature on the disk grows exponentially in $\sqrt{-K_0}R$, where $K_0$ is the target Gaussian curvature.

Let $p$ denote the center of the disk, and let us choose the asymptotic coordinates $u$ and $v$ such that $p$ is the point $(0,0)$. If $|u_0|+|v_0| < R$, it follows easily that there exists $p'$ in $D$ with coordinates $(u_0,v_0)$ since $d(p,p') \leq |u_0|+|v_0| < R$. Let $q$, $r$, $s$ and $t$ denote the vertices of  a ``square" denoted $[qrst]$ in asymptotic coordinates given by the intersections of the asymptotic lines $u=-R/(2+\nu),v=-R/(2+\nu),u=R/(2+\nu),v=R/(2+\nu)$ for some given $\nu > 0$. 

We redefine the variables
$$
u' = \frac{1}{2}+\frac{(2+\eta)u}{2 R}, \quad  v' = \frac{1}{2}+\frac{(2+\eta)v}{2 R}, \quad  \lambda = \frac{4 |K_0|R^2}{(2+\eta)^2}, \quad \phi'(u',v') =\phi(u,v),
$$ and then drop the primes to transform equation (\ref{Sine-Gordon}) into 
\begin{equation}
\phi_{uv} = \lambda \sin(\phi) \quad \mbox{ on the domain } (0,1) \times (0,1)
\label{eq:sg}
\end{equation}
A smooth isometric immersion corresponds to a smooth function $\phi :(0,1)^2 \rightarrow (0,\pi)$ satisfying the Sine-Gordon equation (\ref{eq:sg}).

We want to consider the variational problem for minimizing the $L^p$ norms of the principal curvatures. The natural mode of converge is then $L^p$ convergence and this does not preserve the class of smooth functions. We thus enlarge the admissible set. Let $\mathcal{M}$ denote the set of all measurable functions on the unit square with values in $[0,\pi]$ let  $\mathcal{A} \subseteq \mathcal{M}$ be the set of measurable functions $\phi:[0,1]^2 \rightarrow [0,\pi]$ that are distributional solutions of the Sine-Gordon equation, {\em i.e} 
$$
\int_{[0,1]^2} \phi g_{uv} du dv = \lambda \int_{[0,1]^2} \sin(\phi) g du dv, \qquad \forall g \in C_c^\infty((0,1)^2).
$$
Clearly $\mathcal{A} \subseteq L^p([0,1]^2)$ for all $1 \leq p \leq \infty$. 
\begin{remark} If $\phi_n$ is a sequence in $\mathcal{A}$ and $\phi_n \rightarrow \phi$ in $L^p$, the mean value theorem implies that $\|\sin(\phi_n) - \sin(\phi)\|_p \leq \|\phi_n - \phi\|_p \rightarrow 0$, so it follows that $\phi \in \mathcal{A}$. Consequently, $\mathcal{A}$ is a closed subset of $L^p([0,1]^2)$ for all $1 \leq p \leq \infty$. \end{remark} 
\begin{remark} (Compactness) Since $\mathcal{M}$ consists of bounded measurable functions, for every sequence $\phi_n \in \mathcal{M}$, there exists a subsequence $\phi_{n_k}$ and a limit point $\phi^* \in \mathcal{M}$ such that $\phi_{n_k} \rightharpoonup \phi^*$ weakly in all $L^p([0,1])^2$ for $1 \leq p < \infty$ and $\phi_{n_k} \stackrel{*}{\rightharpoonup} \phi^*$ (converges weak-$*$) in $L^\infty([0,1]^2)$. Note that the limit $\phi^*$ is independent of $p$. In fact, on $\mathcal{M}$, the weak $L^p$ topologies for $1 \leq p < \infty$ and the weak-$*$ $L^\infty$ topoloy are all the same \cite{evans}.
\label{rem:cmpct}
\end{remark}
We are interested in a lower bound for the principal curvatures of a {\em Chebyshev patch}, {\em i.e.}, a portion of a surface of constant negative curvature covered by a single coordinate system given by a Chebyshev net. We thus define
$$
M(\lambda) = \inf_{\phi \in \mathcal{A}} \sup_{x \in [0,1] \times[0,1]} \max\left[\cot\left(\frac{\phi}{2}\right),\tan\left(\frac{\phi}{2}\right)\right].
$$
While we can estimate $M(\lambda)$ by direct numerical computation, we use a slightly different approach that yields some more information. Since $\cot(\phi/2)+\tan(\phi/2) = 2 \csc(\phi)$, we get $k_1^2+k_2^2 = 4 \csc^2(\phi) -2 = 4 \cot^2(\phi) + 2$. This motivates the definition
$$
I_p(\lambda) = \inf_{\phi \in \mathcal{A}} \|\cot^2(\phi)\|_p
$$
where $\|.\|_p$ denotes the $L^p$ norm.  This corresponds to the variational problem for  functionals $
F_p : \mathcal{M} \rightarrow [0,\infty]$ given by 
\[
F_p(\phi) = \begin{cases} \|\cot^2 \phi\|_p, & \phi \in \mathcal{A}. \\ + \infty, & \text{otherwise}. \end{cases}
\]
Note that $M(\lambda)$ and $I_p(\lambda)$ are for a unit square in asymptotic coordinates, and this is not the same thing as a geodesic disk, although the 
two can be related. 

For all $p < \infty$, the map $f \mapsto \|f\|_p$ is differentiable at $f \neq 0$. Also, for any function $f:[0,1]\times[0,1] \rightarrow \mathbb{R}$, the $p$-norm  $\|f\|_p$ is a nondecreasing function of $p$.  It is easy to see that, as a consequence,  $I_p$ is non-decreasing in $p$.

A natural question is whether there exists a minimizer for $F_p$. In this context we have
\begin{proposition} Let $\phi_n$ be a minimizing sequence for $F_p$, {\em i.e.}, $\phi_n \in \mathcal{A}$ and $F_p(\phi_n) \rightarrow I_p$. If $\phi_n$ (or a subsequence) converges pointwise to $\phi^*$, $\phi^*$ is a minimizer for $F_p$. 
\label{prop:existence}
\end{proposition}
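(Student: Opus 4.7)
The plan is to verify the two ingredients that define a minimizer: membership in the admissible set $\mathcal{A}$, and attainment of the infimum $I_p$. Passing to the subsequence that converges pointwise, write $\phi^*(x) = \lim \phi_n(x)$ a.e.\ on $[0,1]^2$. Since each $\phi_n$ takes values in $[0,\pi]$, so does $\phi^*$, giving $\phi^* \in \mathcal{M}$.

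For admissibility, I would pass to the limit in the distributional sine-Gordon identity. Fix any test function $g \in C_c^\infty((0,1)^2)$. Both integrands $\phi_n g_{uv}$ and $\sin(\phi_n)\, g$ converge pointwise to $\phi^* g_{uv}$ and $\sin(\phi^*)\, g$ respectively, and they are dominated by $\pi |g_{uv}|$ and $|g|$, each integrable on $[0,1]^2$. The dominated convergence theorem then gives
\[
\int \phi^* g_{uv}\, du\, dv = \lim_n \int \phi_n g_{uv}\, du\, dv = \lambda \lim_n \int \sin(\phi_n) g\, du\, dv = \lambda \int \sin(\phi^*) g\, du\, dv,
\]
so $\phi^*$ is a distributional solution of (\ref{eq:sg}) and hence $\phi^* \in \mathcal{A}$.

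For optimality, note that $\cot^2$ is nonnegative and continuous on $(0,\pi)$, with $\cot^2(0)=\cot^2(\pi)=+\infty$ in the extended sense. Pointwise convergence of $\phi_n$ to $\phi^*$ then yields $\cot^2(\phi_n) \to \cot^2(\phi^*)$ pointwise a.e. (interpreted in $[0,\infty]$). Fatou's lemma applied to the nonnegative integrands $(\cot^2 \phi_n)^p$ gives
\[
\int (\cot^2 \phi^*)^p\, du\, dv \;\leq\; \liminf_{n \to \infty} \int (\cot^2 \phi_n)^p\, du\, dv,
\]
i.e., $\|\cot^2 \phi^*\|_p \leq \liminf_n \|\cot^2 \phi_n\|_p = I_p$. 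Combined with $\phi^* \in \mathcal{A}$, this forces $F_p(\phi^*) = \|\cot^2 \phi^*\|_p = I_p$, so $\phi^*$ is a minimizer.

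The only subtlety I see is handling the possibility that $\phi^*$ touches the endpoints $0$ or $\pi$ on a set of positive measure; however, this is not an obstacle but rather a consistency check built into Fatou's lemma, since in that case $\|\cot^2 \phi^*\|_p = +\infty$ and the Fatou bound would force $I_p = +\infty$, at which point every element of $\mathcal{A}$ is trivially a minimizer. The main conceptual point is simply that pointwise convergence together with the $L^\infty$ bound inherited from the $[0,\pi]$ constraint provides both the dominated-convergence machinery needed to preserve the sine-Gordon constraint and the lower semicontinuity needed to preserve the infimum, so no compactness argument beyond the hypothesis of the proposition is required.
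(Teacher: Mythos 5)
Your proof is correct and follows essentially the same route as the paper's: dominated convergence to preserve admissibility, then Fatou's lemma for lower semicontinuity of $F_p$ along the pointwise-convergent (sub)sequence. The only cosmetic difference is that you pass to the limit in the distributional sine-Gordon identity directly via dominated convergence, whereas the paper first upgrades pointwise convergence to $L^p$ convergence and then invokes the previously established closedness of $\mathcal{A}$ in $L^p$.
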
 
\begin{proof} 
If $\phi_n$ (or a further subsequence) converges pointwise, it follows from dominated convergence that $\|\phi_n - \phi^*\|_{p} \rightarrow 0$. Since $\mathcal{A}$ is closed in $L^p$, it follows that $\phi^* \in \mathcal{A}$. Also, by Fatou's lemma, we have 
$$
I_p \leq F_p(\phi^*) \leq \liminf F_p(\phi_n) = I_p
$$
showing that the minimum is attained. 
\end{proof}

\begin{remark}  The function $\theta \mapsto - \lambda \sin(\theta)$ is convex on $(0,\pi)$. Therefore, the map 
$$
 \mathcal{M} \ni \phi \mapsto - \lambda \int_{[0,1]^2} \sin(\phi) h du dv,\quad h \in C_c^\infty((0,1)^2), \, h \geq 0,
$$ 
is weakly lower semi-continuous. If $\phi_n \in \mathcal{A}$ is any sequence and $\phi_n \rightharpoonup \phi^*$ weakly in some $L^p, 1 \leq p < \infty$ or converges weak-$*$ in $L^\infty$, $\phi_n \stackrel{*}{\rightharpoonup} \phi^*$ it follows that
$$
\int_{[0,1]^2} \phi^* h_{uv} du dv - \lambda \int_{[0,1]^2} \sin(\phi^*) h du dv \leq 0, \qquad \forall h \in C_c^\infty((0,1)^2), \, h \geq 0.
$$
So, in general, $\phi^* \notin \mathcal{A}$, and thus, the functional $F_p$ are not  lower semi-continuous with respect to weak or weak-$*$ convergence. Thus, it is not enough to have weak convergence of the minimizing sequence for the existence of a minimizer. The pointwise convergence in the proposition is also necessary for using the  direct method to show the existence of a minimizer for $F_p$.
\end{remark}
The previous remark suggests considering the problem  of minimizing the relaxed energy $\tilde{F}_p$, namely the largest weakly lower semi-continuous function less than or equal to $F_p$ on $\mathcal{M}$ \cite{pedregal:yngmsr,muller}.  Let $\mathcal{B}$ denote the weak closure of $\mathcal{A}$ in some $L^{p'}$ (the precise value of $p'$ is irrelevant as indicated in Remark~\ref{rem:cmpct}). The relaxation of $F_p$ is defined by
\[
\tilde{F}_p(\phi) = \begin{cases} \inf\{ \liminf_n F_p(\phi_n) : \phi_n \rightharpoonup \phi\}  & \phi \in \mathcal{B}. \\ + \infty, & \mbox{otherwise} \end{cases}
\]
where the infimum is over all sequences which converge weakly to $\phi$. Clearly, $\mathcal{A} \subseteq \mathcal{B} \subseteq \mathcal{M}$ and $\tilde{F}_p$ is weakly lower semi-continuous by construction. It thus follows from Remark~\ref{rem:cmpct} that we can prove the existence of a minimizer for $\tilde{F}_p$ by the direct method in the calculus of variations.

The question of the existence of a minimizer for $F_p$ can thus be posed in terms of the nature of the minimizer of $\tilde{F}_p$ which always exists. If there exists $\phi^* \in \mathcal{A}$ which is a minimizer for $\tilde{F}_P$, then $\phi^*$ is also a minimizer for $F_p$. Conversely, if all the minimizers of $\tilde{F}_p$ are in $\mathcal{B} \setminus \mathcal{A}$, it follows that $F_p$ does not have a minimizer. Studying this question involves computing the relaxation $\tilde{F}_p$ which in turn needs a careful analysis of the potential oscillations in  weakly convergent sequences of solutions of the Sine-Gordon equation. We are investigating these issues in ongoing work.

 \subsection{A numerical investigation of lower bounds for the curvature}
 
 We can discretize the Sine-Gordon equation on a $N \times N$ grid covering the unit square, and minimize the $l^p$ norms of $\cot^2 \phi$ over all solutions of this discrete Sine-Gordon equation. This is a finite dimensional problem with a coercive energy, so there always exists a minimizer $\phi^N$ on the $N \times N$ grid. Fig.~\ref{numerics:lpnorm} displays the numerically obtained results for this minimization. Observe the lack of high frequency oscillations in the numerically obtained minimizers. In fact, increasing the mesh size suggests that as $N \rightarrow \infty$, the numerically obtained minimizers converge pointwise on $[0,1]^2$. If this were indeed the case, Proposition~\ref{prop:existence} implies the existence of minimizers for the functionals $F_p$ on the admissible set $\mathcal{A}$.
 
We will henceforth assume that the infimum is attained, {\em i.e.},  for any given  $p$ and $\lambda$, there is an admissible function $\phi_p(\lambda) \equiv \phi_p \in \mathcal{A}$ such that $I_p = \|\cot^2(\phi_p)\|_p$, it follows that 
$$
I_p \leq I_\infty \leq \|\cot^2(\phi_p)\|_{\infty}.
$$
Thus, numerically determining the function $\phi_p$ which minimizes the $p$-norm of $\cot^2(\phi)$ in the admissible set, gives both lower and upper bounds for $I_\infty$, and the difference between these bounds gives an estimate for the error in a numerical determination of $I_\infty$.

Once we determine $I_\infty(\lambda)$, we can compute $M(\lambda)$ as the following argument shows:
\begin{align*}
\max(|k_1|,|k_2|) & = \frac{|k_1|+|k_2|}{2} +   \frac{||k_1|- |k_2||}{2} \\
& =  \sqrt{\frac{k_1^2+2 |k_1 k_2| + k_2^2}{4}} +  \sqrt{\frac{k_1^2 - 2| k_1 k_2| + k_2^2}{4}} \\
& = \sqrt{\frac{k_1^2+2 + k_2^2}{4}} +  \sqrt{\frac{k_1^2 - 2 + k_2^2}{4}} \\
& = \sqrt{\cot^2(\phi)+1}+ \sqrt{\cot^2(\phi)} \equiv g(\cot^2(\phi))
\end{align*}
where $g(x) = \sqrt{x} + \sqrt{1+x}$ is a monotone function with a monotone inverse. Consequently, 
$$
M(\lambda) = g(I_\infty(\lambda)) = \sqrt{I_\infty(\lambda)} + \sqrt{I_\infty(\lambda)+1},
$$
and the value of $M(\lambda)$ is attained on $\phi_\infty$, the extremizer for $I_\infty$. 

The same argument can be applied to any function $\zeta(\phi)$ with the property that there is a continuous monotone function $h$ with a continuous monotone inverse such that $h(\zeta(\phi)) = \cot^2(\phi)$. In particular, if $\zeta(\phi) = (\phi - \pi/2)^2$, there is such a function $h$, and it follows that 
\begin{align}
\phi_\infty & \equiv \mbox{arg}\min_{\phi \in \mathcal{A}} \max_{x \in D} \cot^2(\phi) \nonumber \\
& = \mbox{arg}\min_{\phi \in \mathcal{A}} \max_{x \in D} \max[\tan(\phi/2),\cot(\phi/2)] \nonumber \\
& = \mbox{arg}\min_{\phi \in \mathcal{A}} \max_{x \in D} \left[\phi - \frac{\pi}{2}\right]^2.
\label{eq:equivalence}
\end{align}
We can solve any of these minimax problems to determine the extremizing function $\phi_\infty$.  For our numerics, we use the third formulation of the problem, {\em i.e.}, finding the minimax for the square of the deviation of $\phi$ from $\pi/2$. Also, we note that, $I_\infty$ directly estimates the maximum bending energy density $k_1^2+k_2^2$ and $I_1$ estimates the total bending energy. 

Figure \ref{numerics:lpnorm} shows the numerically computed values of $I_p$ and $\|\cot^2(\phi_p)\|_\infty$ as a function of $p=2^n$, $n\in \mathbb{N}$, for $\lambda = 4$. Figure \ref{numerics:phi} and figure \ref{numerics:cot} shows the numerically determined extremizers $\phi_p$ as well as the quantity $\cot^2(\phi_p)$ for $\lambda = 4$ and $p = 16$. The figures for other values of $\lambda$ are similar in character. The key observation from these figures is that as $p$ gets large $\phi_p(u,v)$ numerically approaches a limit that is purely a function of the combination $u+v$ (see figure \ref{numerics:collapse}). This motivates the ansatz $\phi_\infty(u,v) = \psi(u+v)$. 

\begin{figure}[htp] \label{fig:numerics}
\begin{center}
\subfigure[]{
\includegraphics[width=.7\textwidth]{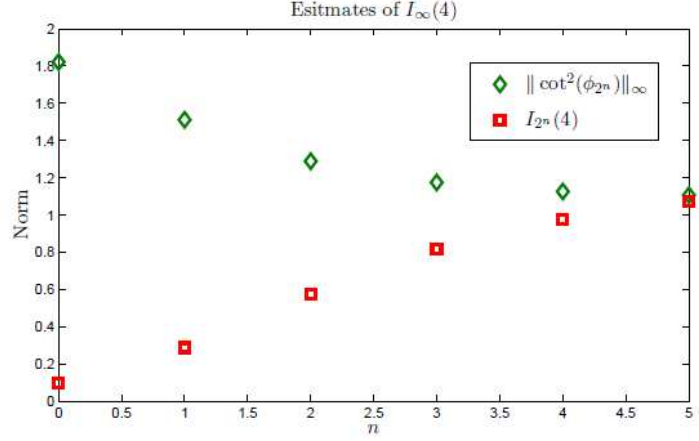}
\label{numerics:lpnorm}
}

\subfigure[]{
\includegraphics[width=.35\textwidth]{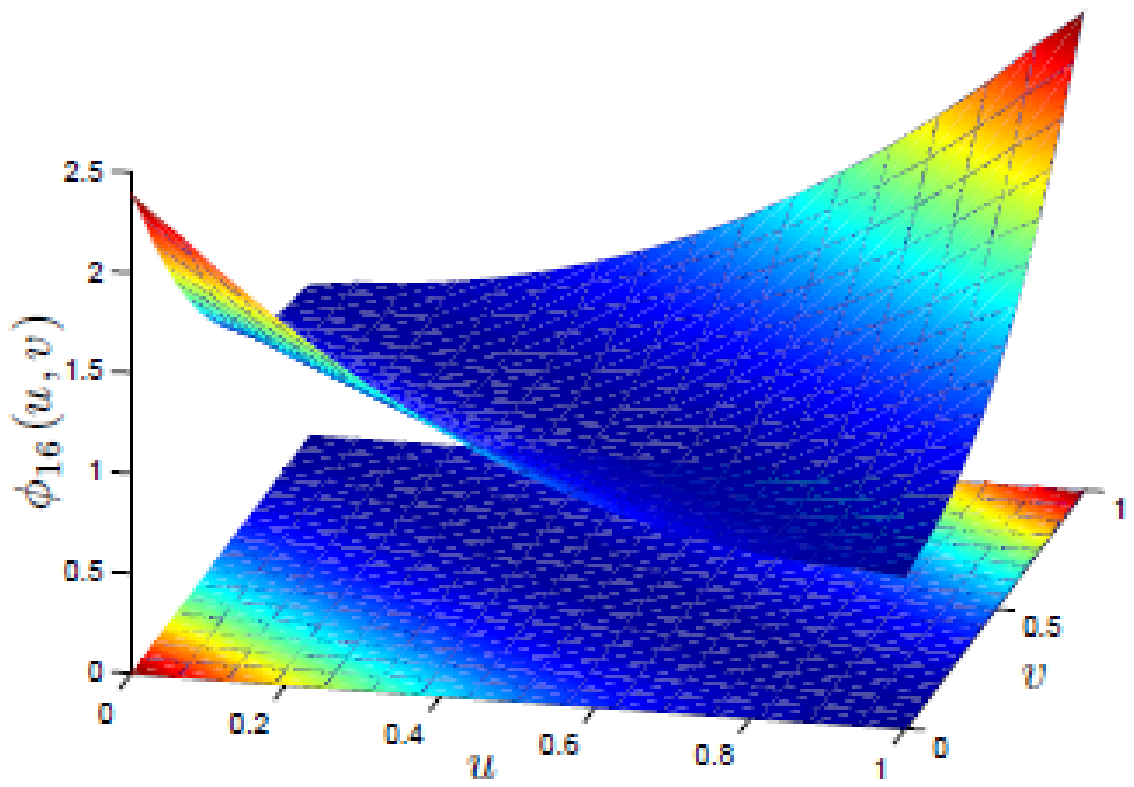}
\label{numerics:phi}
}
\subfigure[]{
\includegraphics[width=.35\textwidth]{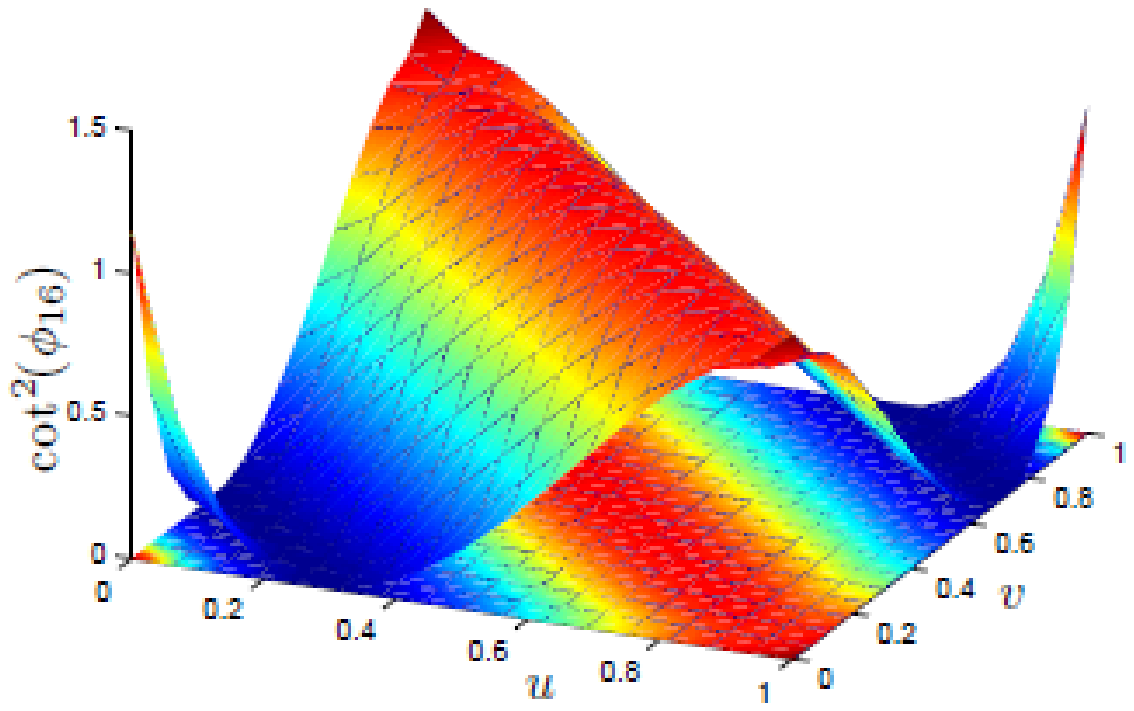}
\label{numerics:cot}
}

\subfigure[]{
\includegraphics[width=.7\textwidth]{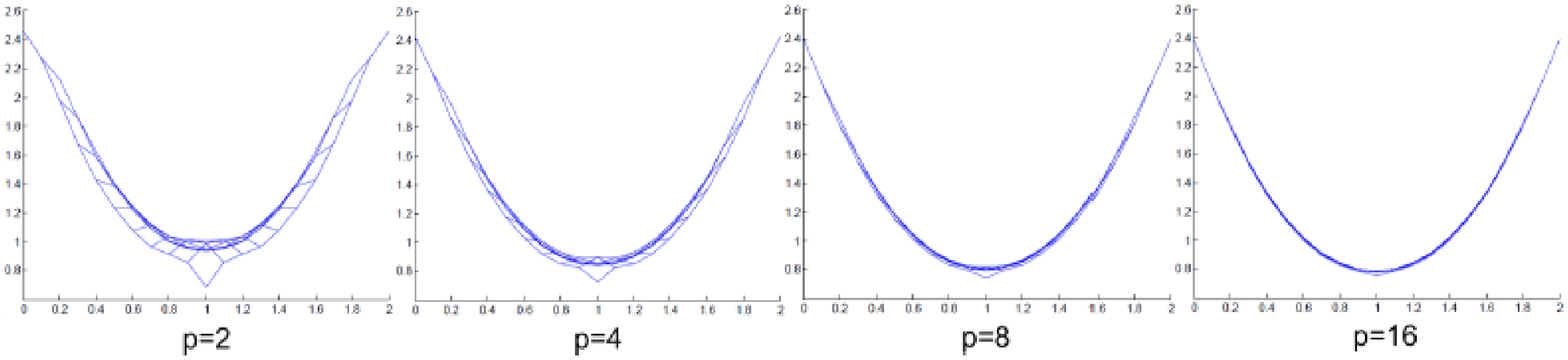}
\label{numerics:collapse}
}
\end{center}
\caption{\subref{numerics:lpnorm} Plot of $I_{2^n}$ and $\|\cot^2(\phi_{2^n})\|$ as a function of $n$ with $\lambda=4$. The collapse of these data points onto a single line confirms that these two quantities are valid approximations to $I_{\infty}$ for large enough $n$. \subref{numerics:phi} Plot of $\phi_{16}$ as a function of $u$ and $v$ with the contour plot of $\phi_{16}$ underneath the surface. We can see from the contour plot that it appears that lines $u+v=\text{constant}$ form the contours suggesting that $\phi_{\infty}$ is a function of $u+v$. \subref{numerics:cot} $\cot^2(\phi_{16})$ plotted as a function of $u$ and $v$ again with the contour plot $\cot^2(\phi_{16})$ plotted underneath the surface.  \subref{numerics:collapse} Plot of $\phi_p(u,v)$ as a function of $u+v$ for $p=2,4,8,16$ the collapse into a single curve again indicates that $\phi_{\infty}$ is a function of $u+v$ only. 
}
\end{figure}

 Substituting this ansatz in equation~(\ref{eq:sg}) yields $\psi'' = \lambda \sin(\psi)$, which is the Hamiltonian motion of a unit mass in a potential $V(\psi) = \lambda \cos(\psi)$. The trajectories in phase space $(\psi,\psi')$ are given by  conservation of the energy, $\psi'^2/2 + \lambda \cos(\psi) = E$ (see figure \ref{fig:phase_portrait}). Note that there are three distinct types of surfaces of revolution of constant negative curvature \cite{Rozendorn-1989} and for each of these surfaces $\phi$ is a function of $u+v$ only. Therefore, the three distinct trajectories in the phase plane correspond to the three types of surfaces of revolution. The pseudosphere corresponds to the separatrix, hyperboloid surfaces correspond to closed orbits, and conical surfaces correspond to unbounded orbits.
\begin{figure}[htp] \label{fig:phase_portrait}
\begin{center}
\includegraphics[width=.7\textwidth]{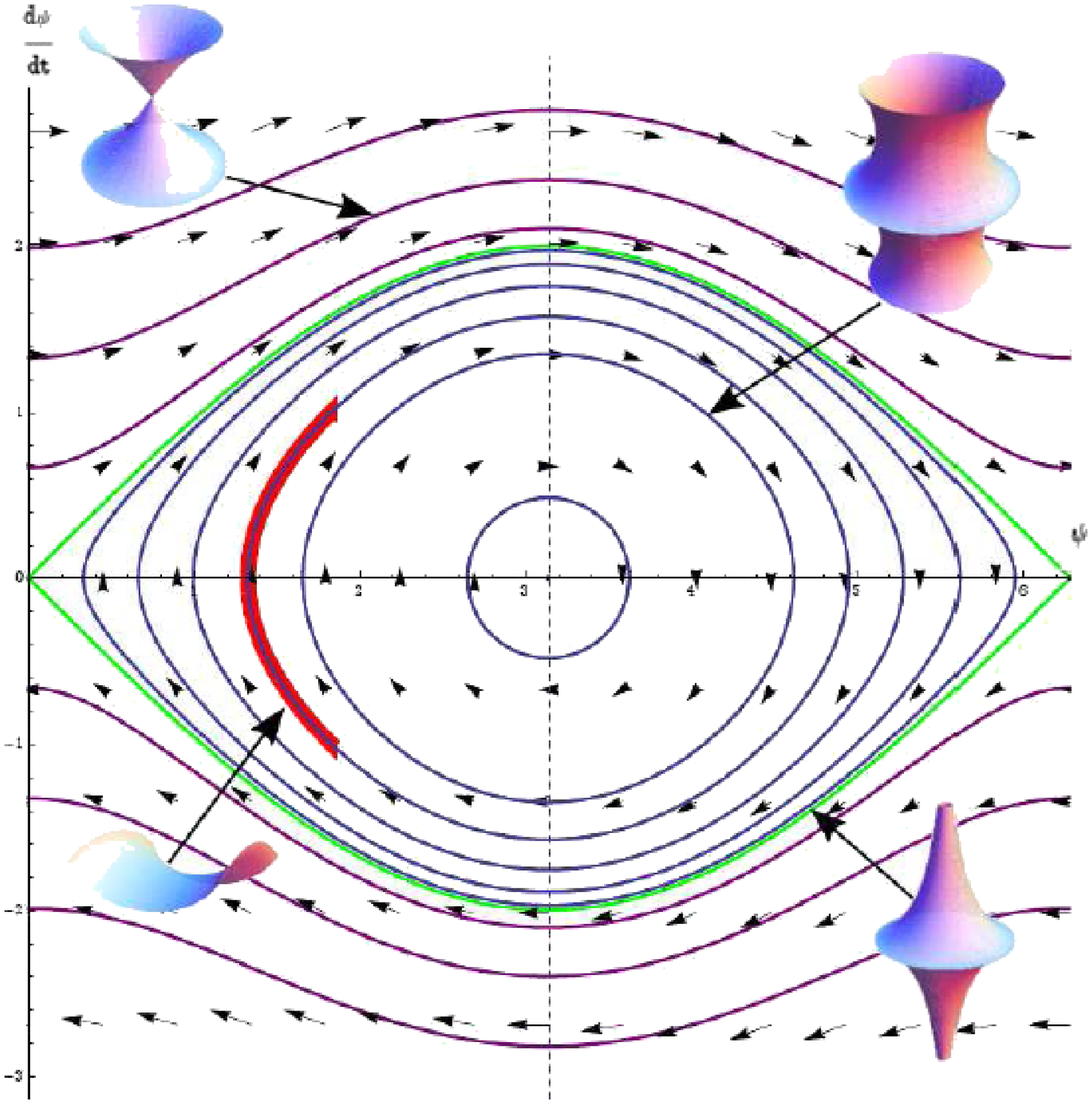}
\caption{Phase portrait of the differential equation $\psi^{\prime \prime}=\lambda \sin(\psi)$. The three different types of orbits correspond to the three different types of surfaces of revolution of constant negative curvature. The piece of the orbit outlined in bold is the trajectory $\psi:[0,2]\rightarrow (0,\pi)$ that minimizes the value of $\cot^2(\psi)$ over all trajectories in the phase plane defined over the same time domain.}
\end{center}
\end{figure}

We seek  solutions on the unit square, $0 \leq t \equiv u+v \leq 2$, that are bounded away from $\psi = \pi$ on the interval $[0,2]$. If we want trajectories that minimize the maximum value of $\cot^2(\psi)$, it is clear from the symmetry of $\cot(\psi)$ we are interested in the closed orbits. Furthermore, we seek a piece of these trajectories that satisfies the further symmetry requirement $\psi(0) = \psi(2) = \pi - \epsilon, \psi(1) = \epsilon, \text{ and } \psi^{\prime}(1)=0$ since if these conditions are not met we could select a different piece of the trajectory that could further minimize $\cot^2(\phi)$. This trajectory is depicted in bold in figure \ref{fig:phase_portrait}. 

The value $E$ of a minimizing trajectory can be computed as $E= [\psi'(1)]^2/2 + \lambda \cos(\psi(1)) = \lambda \cos(\epsilon)$, and the relation between $\epsilon$ and $\lambda$ is determined by requiring that the time it takes to get from $\psi = \pi - \epsilon $ to $\psi = \epsilon$ is 1, {\em i.e.},
\begin{equation}
\int_\epsilon^{\pi - \epsilon} \frac{d\theta}{\sqrt{2 \lambda(\cos(\epsilon) -\cos(\theta))}} = 1
\label{eq:tof}
\end{equation}
We can solve this equation numerically, and thereby determine $\phi_{\infty}(u,v) = \psi(u+v)$, provided that our ansatz is valid. 

Generating an asymptotic series for (\ref{eq:tof}), we have for small $\epsilon$ that
\begin{equation*}
\frac{-\log(\epsilon) + \log(8)+O(\epsilon)}{\sqrt{\lambda}} = 1.
\end{equation*}
Rearranging, we obtain
\begin{equation}
\cot(\epsilon) \sim \epsilon^{-1} + O(\epsilon) = \frac{1}{8} \exp[\sqrt{\lambda}] + O[e^{-\sqrt{\lambda}}] 
\end{equation}
This implies $I_{\infty}(\lambda) \sim \frac{1}{64} \exp[2 \sqrt{\lambda}] + O(1)$. Finally, if we set $\nu=0$ we have in terms of $\sqrt{-K_0}R$ that
\begin{equation} \label{eq:sclng}
I_{\infty}(\sqrt{-K}R)\sim \frac{1}{64}e^{2\sqrt{-K_0}R}+O(1).
\end{equation}  

\subsection{Elastic energy of hyperboloids of revolution}
We conclude this section by comparing the bending energy of disks cut from hyperboloids of revolution with the pseudosphere. The family of hyperboloids are parameterized by the following family of C-nets of the second kind
\begin{equation} \label{hyperboloid}
\mathbf{y}(\eta,\xi)=\frac{1}{b}\left(\text{dn}(\eta,b^2)\cos(bv),\text{dn}(\eta,b^2)\sin(b\xi),u-E(\text{am}(\eta,b^2)|b^2)\right),
\end{equation}
where $\text{dn}$, $\text{am}$, $\text{sn}$ denote the usual Jacobi elliptic functions \cite{AS}, $E$ is the elliptic function of the second kind \cite{AS}, and $0<b<1$ \cite{gray}. The metric and principal curvatures are given by
\begin{equation}\label{hyperboloid-metric}
g_{11}^{\prime}=b^4\text{sn}^2(\eta,b^2),\, g_{12}^{\prime}=g_{21}^{\prime}=0,\, g_{22}^{\prime}=1-b^4\text{sn}^2(\eta,b^2)
\end{equation}
\begin{equation}\label{hyperbolid-principal}
k_1^2=\frac{g_{22}^{\prime}}{g_{11}^{\prime}}=\frac{1-b^4\text{sn}^2(\eta,b^2)}{b^4\text{sn}^2(\eta,b^2)},\, k_2^2=\frac{b^4\text{sn}^2(\eta,b^2)}{1-b^4\text{sn}^2(\eta,b^2)}.
\end{equation} 
The Christoffel symbols for this parameterization are
\begin{equation} \begin{array}{ll} \label{christoffel-symbols-hyperboloid}
\Gamma_{11}^1=\frac{\text{cn}(\eta,b^2)\text{dn}(\eta,b^2)}{\text{sn}(\eta,b^2)}, &  \Gamma_{11}^2=0\\
\Gamma_{12}^1=0,  & \Gamma_{12}^2=\frac{-b^4\text{sn}(\eta,b^2)\text{cn}(\eta,b^2)\text{dn}(\eta,b^2)}{1-b^4\text{sn}(\eta,b^2)},\\  
\Gamma_{22}^1=-\frac{\text{cn}(\eta,b^2)\text{dn}(\eta,b^2)}{\text{sn}(\eta,b^2)} &  \Gamma_{22}^2=0,
\end{array}
\end{equation}
Therefore, the geodesic equations are
\begin{align}\label{hyperboloid-geodesics}
0&=\frac{d^2\eta}{dt^2}+\frac{\text{cn}(\eta,b^2)\text{dn}(\eta,b^2)}{\text{sn}(\eta,b^2)}\left[\left(\frac{d\eta}{dt}\right)^2+\left(\frac{d\xi}{dt}\right)^2\right]\\
0&=\frac{d^2\xi}{dt^2}+\frac{2b^4\text{sn}(\eta,b^2)\text{cn}(\eta,b^2)\text{dn}(\eta,b^2)}{1-b^4\text{sn}^2(\eta,b^2)}.
\end{align}

It is easy to see that the principal curvatures diverge when $\pi=\text{am}(\eta,b^2)$. Consequently, the center of the disk is located at the coordinate $\eta_0=F(\pi/2,b^2)$, $F$ is the elliptic function of the first kind, and thus the relationship between $\epsilon$ and $b$ is given by
\begin{equation}
\frac{\epsilon}{2}=\arctan\left(\sqrt{\frac{1-b^4\sin^2(\text{am}(F(\pi/2,b^2))}{b^4\sin^2(\text{am}(F(\pi/2,b^2))}}\right)=\arctan\left(\sqrt{\frac{1-b^4}{b^4}}\right).
\end{equation}
Solving this equation yields
\begin{equation}
b=\sqrt{\cos(\epsilon/2)}.
\end{equation}
Therefore, for a particular radius $R$ we can numerically solve equation (\ref{eq:tof}) to obtain $b$. Once, we determine $b$ we can then numerically integrate the bending energy
\begin{equation} \label{hyperboloid-energy}
\mathbf{\mathcal{B}[y]}=\int_{0}^{2\pi}\int_0^R\left(\frac{1-b^4\text{sn}^2(\eta,b^2)}{b^4\text{sn}^2(\eta,b^2)}+\frac{b^4\text{sn}^2(\eta,b^2)}{1-b^4\text{sn}^2(\eta,b^2)}\right)\sinh(r)\,drd\Psi,
\end{equation}
following a similar procedure as we did with the pseudosphere. These results are summarized in figure \ref{fig:hyperboloid}.
\begin{figure}[htp]\label{fig:hyperboloid}
\begin{center}
\subfigure[]{
\includegraphics[width=.8\textwidth]{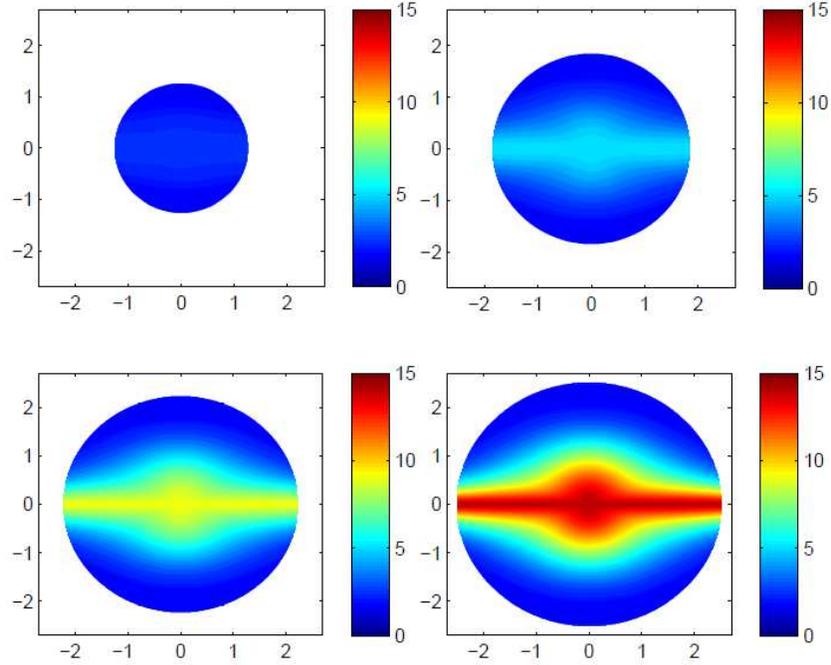}
\label{fig:hyperboloid:contour}
}
\subfigure[]{
\includegraphics[width=.8\textwidth]{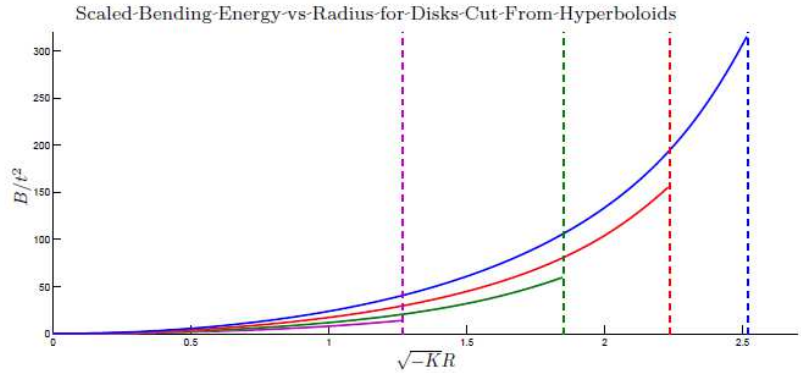}
\label{fig:hyperboloid:energy}
}
\caption{\subref{fig:hyperboloid:contour} A representation of the geodesic disks cut out of hyperboloid surfaces colored by $k_1^2+k_2^2$. The radius and polar angle of these disks correspond to the geodesic radius $r$ and polar angle $\Psi$. These disks each have a maximum radii of $R=1.2654,\, 1.8505,\, 2.2342,\, 2.5199$. These values were chosen for comparison with figures \ref{psfigure} and \ref{amslerenergy}.   \subref{fig:hyperboloid:energy} The scaled bending energy of disks cut from hyperboloids plotted versus geodesic radius.}
\end{center}
\end{figure}

%
%
\section{Small Slopes Approximation}
\label{sec:small_slopes}

The configurations formed from the pseudosphere and hyperboloids can have very large bending energy concentrated at the center and top edge of the disk and these configurations do not have periodic profiles.  In order to understand the experimental observations for negatively curved disks \cite{an-experimental-study-of-shape-transitions-and-energy-scaling-in-thin-non-euclidean-plates},
we thus need to identify periodic configurations with a lower elastic energy.

 We begin by considering the small slopes approximation.  In this approximation, valid for $\epsilon=\sqrt{-K_0}\ll 1$, the configuration is assumed to have a parameterization $\mathbf{x}:D\rightarrow 
 \mathbb{R}^3$ of the form:
\begin{equation} \label{LinearGeometry}
\mathbf{x}(u,v)=(u+\epsilon^2 f(u,v),v+\epsilon^2g(u,v),\epsilon \omega(u,v)),
\end{equation}
where $f,g\in W^{1,2}(D)$, $\omega\in W^{2,2}(D)$  and $u,v$ are Cartesian coordinates on $D$ \cite{Landau, gamma_convergence_smallslopes, HaiyiLiang12292009}. That is, as a mapping,  $\mathbf{x} = i +  \epsilon \omega + \epsilon^2 \chi$, where $i$ is the standard immersion $\mathbb{R}^2 \rightarrow \mathbb{R}^3$, $\omega$ is the ``out of plane" deformation, {\em i.e.}, $\omega$ maps into the orthogonal complement of $i(\mathbb{R}^2)$ and $\chi = i\circ(f,g)$ is the ``in-plane" displacement. The target metric $\mathbf{g}$ takes the form 
\begin{eqnarray}
ds^2=\mathbf{g} &=& dr^2+\left(r^2+\frac{\epsilon^2 r^4}{3}\right)\,d\Psi^2 = (dr^2 + r^2 d\Psi^2) + \epsilon^2 \frac{r^4 d\Psi^2}{3}\\
&=& du^2+dv^2+\epsilon^2\left(-\frac{v^2}{3}du^2+2\frac{uv}{3}dudv-\frac{u^2}{3}dv^2\right).
\end{eqnarray}
 That is, $\mathbf{g} = \mathbf{g}_0 + \epsilon^2 \mathbf{g}_1$ is an asymptotic expansion for the target metric where $\mathbf{g}_0$ is the Euclidean metric on $\mathbb{R}^2$. 
 
 If we define the in-plane strain terms by
\begin{equation}
\begin{cases}
\gamma_{11}=2\frac{\partial f}{\partial u}+\left(\frac{\partial \omega}{\partial u}\right)^2 -\frac{y^2}{3}\\
\gamma_{12}=\frac{\partial f}{\partial v}+\frac{\partial g}{\partial u}+\frac{\partial \omega}{\partial u}\frac{\partial \omega}{\partial v}+\frac{uv}{3}\\
\gamma_{22}=2\frac{\partial g}{\partial v}+\left(\frac{\partial \omega}{\partial v}\right)^2-\frac{u^2}{3}
\end{cases}
\end{equation}
then using the above asymptotic expansions in the elastic energy (\ref{energy}) and keeping the lowest order terms, we get
\begin{align}
\bar{\mathcal{E}}[\mathbf{x}] & = \epsilon^{-2} \mathcal{E}[\mathbf{x}] \nonumber \\
& = \int_{B(R)}\left(\|D\chi^T + D \chi + D \omega^T \cdot D\omega-\mathbf{g}_1\|^2+t^2\|D^2\omega\|^2\right)\,dA \,+\mathcal{O}(\epsilon^2,t^0)\\
&= \int_{B(R)}\left(\gamma_{11}^2+2\gamma_{12}^2+\gamma_{22}^2\right)\,dudv+t^2\int_{B(R)}\|D^2\omega\|^2\,dudv \,+\mathcal{O}(\epsilon^2,t^0),
\label{reduced-energy}
\end{align}
where $B(R)$ is a (2 dimensional Euclidean) disk of radius $R$ and $dA$ is the Euclidean area element. The stretching and normalized bending energies under this approximation take the forms
\begin{equation}
S[\mathbf{x}]=\int_{B(R)}\left(\gamma_{11}^2+2\gamma_{12}^2+\gamma_{22}^2\right)\,dudv
\end{equation}
\begin{equation}\label{smallslopes-energy}
\mathfrak{B}[\mathbf{x}] = \int_{B(R)}\|D^2 \omega\|^2\,du\,dv,
\end{equation}
with the appropriate elastic energy given by
\begin{equation}
E_t[\mathbf{x}]=S[\mathbf{x}]+t^2\mathfrak{B}[\mathbf{x}].
\end{equation}
We are using the symbols $S$ and $\mathfrak{B}$ to indicate that these are the lowest order contributions to the stretching and bending energies. 

In this approximation, the Mean and Gaussian curvatures to the lowest orders in $\epsilon$ are given by
$$
H = \epsilon \, \, \mathrm{Tr}[D^2 \omega], \quad \quad K = \epsilon^2  \,\,\mathrm{Det}[D^2 \omega]
$$
so that a necessary and sufficient condition for the surface to have constant negative curvature $K = -\epsilon^2$ is that the out of plane displacement $\omega$ should satisfy the following hyperbolic Monge-Ampere equation
\begin{equation} \label{Monge-Ampere}
\frac{\partial^2 \omega}{\partial u^2}\frac{\partial^2 \omega}{\partial v^2}-\left(\frac{\partial^2 \omega}{\partial u \partial v}\right)^2=-1.
\end{equation}
Once a solution to (\ref{Monge-Ampere}) is chosen we can match the metric $\mathbf{g}^{\prime}$ to the target metric $\mathbf{g}$ by solving for the in plane stretching terms $f$ and $g$.

A class of solutions to (\ref{Monge-Ampere}) is the following one parameter family of quadratic surfaces
\begin{equation}
\omega_a(u,v)=\frac{1}{2}\left(au^2-\frac{1}{a}v^2\right),
\label{quad-surface}
\end{equation}
parametrized by $a\in \mathbb{R}^+$ \cite{Handbook-of-partial-differential-equations}. These solutions are saddle shaped such that $\omega$ vanishes on a pair of lines passing through the origin of the $u\--v$ plane intersecting at an angle $\theta=\pm \arctan(a)$. 

To create periodic structures we first consider the function $\omega_a^*(\theta,r)=\omega_a(r,(\theta-\arctan(a))$ which is simply a rotation of $\omega_a$ that aligns the asymptotic lines with the $u$ axis and the line $\theta=2\arctan(a)$. Then, we take the odd periodic extension $\overline{\omega}_a$ of $\omega_a^*(\theta,r)$ with respect to $\theta$ about the two asymptotic lines. Now, since the period of $\overline{\omega}_{a}$ is $4\arctan(a)$, to guarantee that the shape is continuous at $\theta=0$ we must have that $a$ satisfies $\arctan(a)=\pi/2n$, where $n$ is the number of waves in the configurations profile (see figure \ref{periodicprofiles}). 

\begin{figure}[htp]
\begin{center}
\includegraphics[width=\textwidth]{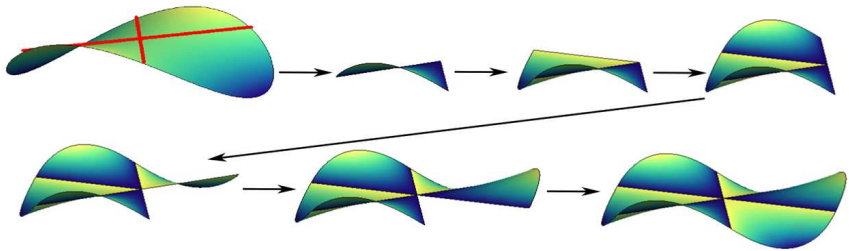}

\caption{ The one parameter family of solutions $\omega_a$ are saddles with asymptotic lines that vanish along the lines through the origin described by the polar angles $\theta=\pm \arctan(a)$ colored in red. Periodic solutions to the Monge-Ampere equation with $n\in \{2,3,\ldots\}$ periods can be constructed by taking odd periodic extensions of the solutions $\displaystyle{\omega_{\tan(\pi/2n)}(u,v)}$ about the line $\theta= n/2\pi$. These solutions are $C^1(D)$ and are smooth everywhere except on the asymptotic lines.} \label{periodicprofiles}
\end{center}
\end{figure}

Since $\omega_a$ is smooth and vanishes along the two lines we are taking the extension about, it is a basic result of Fourier analysis that $\overline{\omega}_a$ has continuous first partial derivatives but a jump in the second partial derivative with respect to $\theta$. Consequently, these lines are singularities in the sense that the surface is not smooth across these lines but the bending energy remains continuous and is finite. Furthermore, since the metric only involves first derivatives of the map $\mathbf{x}$ the stretching energy will still vanish on these lines. Therefore, these surfaces are valid ``isometric" immersions for the small slopes approximation i.e., they are $W^{2,2}$ isometric immersions.  The elastic energy of a disk with radius $R$ and $n$ periods is simply:
\begin{align} \label{linear-energy}
\mathfrak{B}\left(\overline{\omega}_{\tan\left(\frac{\pi}{2n}\right)}\right)&=2n\int_{\frac{-\pi}{2n}}^{\frac{\pi}{2n}}\int_{0}^{R}|D^2\omega_{\tan\left(\frac{\pi}{2n}\right)}|^2r\,dr\,d\theta \nonumber\\
&=\pi R^2\left(\tan^2\left(\frac{\pi}{2n}\right)+\cot^2\left(\frac{\pi}{2n}\right)\right).
\end{align}

For any solution of  (\ref{Monge-Ampere}), the product of the eigenvalues of the Hessian $D^2 \omega$ is $k_1k_2 = -1$. Consequently,
$$
\|D^2 \omega\|^2 = k_1^2 + k_2^2 = k_1^2 + k_1^{-2} \geq 2
$$
with equality only if $k_1^2 = k_2^2 = 1$. With $n = 2$ ({\em i.e.} $a = 1$) in the periodic profiles from above, the Hessian of the surface has eigenvalues $k_{1,2}=\pm 1$. Consequently, the 2-wave saddle $\omega(u,v) = \frac{1}{2}(u^2-v^2)$ gives a global minimum for the bending energy over all the solutions of (\ref{Monge-Ampere}). 

Using this as a test function for the elastic energy in (\ref{reduced-energy}), we get an upper bound for the energy of the minimizer which scales like $t^2$.  It is a standard argument to show that for all $t >0$ there exist minimizers for the reduced elastic energy $E_t$ \cite{evans}. Further, if the appropriate $\Gamma$-limit of $E_t$ exists then these minimizing configurations and their derivatives will converge as $t\rightarrow 0$ to a limit configuration which satisfies satisfies (\ref{Monge-Ampere}) and minimizes the bending energy (\ref{smallslopes-energy}) \cite{braides}. We provide a more detailed argument proving these statements below.

\subsection{$\Gamma$-convergence and convergence of minimizers}
Recall that on a metric space $X$ a sequence of functionals $F_t:X\rightarrow \mathbb{R}$ $\Gamma$-converge as $t\rightarrow 0$ to a functional $F_0:X\rightarrow \mathbb{R}$, written $\Gamma-\lim_{t\rightarrow 0}F_t=F_0$, if for all $x\in X $ we have that

\begin{enumerate}
\item (\textbf{Liminf Inequality}) for every sequence $x_j$ converging to $x$ 
\begin{equation} \label{gamma-lsc}
F_0[x]\leq \liminf_jF_j[x_j];
\end{equation}
\item (\textbf{Recovery Sequence}) there exists a sequence $x_j$ converging to $x$ such that
\begin{equation} \label{recovery}
F_0[x]\geq \limsup_jF_t[x].
\end{equation}
\end{enumerate}

\begin{proposition}
Let $F_t$ be the sequence of functionals on $\mathcal{A}=W^{1,2}(B(R))\times W^{1,2}(B(R))\times W^{2,2}(B(R))$ defined by
\begin{equation}
F_t[\mathbf{x}]=F_t[(f,g,\omega)]=\frac{1}{t^2}E[\mathbf{x}],
\end{equation}
where we are naturally identifying a configuration $\mathbf{x}$ with an element $(f,g,\omega)\in \mathcal{A}$.
Then
\begin{equation}
\Gamma-\lim_{t\rightarrow 0} F_t[(f,g,\omega)]=F_0[(f,g,\omega)]=\begin{cases}
\mathfrak{B}[\omega] &\text{ if } \gamma_{11}=\gamma_{12}=\gamma_{22}=0\\
\infty & \text{otherwise}
\end{cases}
\end{equation}
\end{proposition}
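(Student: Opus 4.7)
The plan is to separately verify the liminf inequality and the existence of a recovery sequence, working on $\mathcal{A} = W^{1,2}(B(R)) \times W^{1,2}(B(R)) \times W^{2,2}(B(R))$ endowed with the natural weak topology. Since $F_t[\mathbf{x}] = t^{-2} S[\mathbf{x}] + \mathfrak{B}[\omega]$, the only $t$-dependent piece is the prefactor on $S$; as $t \to 0$ this forces $S[\mathbf{x}] = 0$ in the limit, leaving $\mathfrak{B}[\omega]$ as the surviving contribution. The heart of the argument is therefore lower semicontinuity of $S$ and $\mathfrak{B}$ along weakly convergent sequences.

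For the liminf inequality, I would first establish that $\gamma_{ij}[\mathbf{x}_n] \rightharpoonup \gamma_{ij}[\mathbf{x}]$ in $L^2$ whenever $(f_n, g_n, \omega_n) \rightharpoonup (f, g, \omega)$ in $\mathcal{A}$. The linear parts $D f_n, D g_n$ converge weakly in $L^2$ by hypothesis, while the quadratic parts involve products of first derivatives of $\omega_n$. The two-dimensional Rellich--Kondrachov embedding $W^{2,2} \hookrightarrow W^{1,4}$ is compact, so $D \omega_n \to D \omega$ strongly in $L^4$, and therefore the products $(\omega_{n,u})^2$, $(\omega_{n,v})^2$, $\omega_{n,u}\omega_{n,v}$ converge strongly in $L^2$. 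Weak lower semicontinuity of the $L^2$ norm then yields $S[\mathbf{x}] \leq \liminf_n S[\mathbf{x}_n]$ and $\mathfrak{B}[\omega] \leq \liminf_n \mathfrak{B}[\omega_n]$. I then split into two cases: if $\gamma_{ij}[\mathbf{x}] = 0$ for all $i,j$, the bound $F_{t_n}[\mathbf{x}_n] \geq \mathfrak{B}[\omega_n]$ gives $\liminf F_{t_n}[\mathbf{x}_n] \geq \mathfrak{B}[\omega] = F_0[\mathbf{x}]$; otherwise $c := S[\mathbf{x}] > 0$, so $S[\mathbf{x}_n] \geq c/2$ for $n$ large and $F_{t_n}[\mathbf{x}_n] \geq c/(2 t_n^2) \to \infty = F_0[\mathbf{x}]$.

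For the recovery sequence the constant choice $\mathbf{x}_n \equiv \mathbf{x}$ suffices in both cases. When the strain vanishes, $S[\mathbf{x}] = 0$ gives $F_{t_n}[\mathbf{x}] = \mathfrak{B}[\omega] = F_0[\mathbf{x}]$ for every $n$, and the limsup equals $F_0[\mathbf{x}]$ exactly; when the strain does not vanish, $F_0[\mathbf{x}] = +\infty$ makes the limsup inequality vacuous.

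The only substantive step is the passage to the limit in the nonlinear strain terms $(\omega_{n,u})^2$, etc., which relies on the compact embedding $W^{2,2} \hookrightarrow W^{1,4}$ available in two dimensions. With that in hand, the split into isometric and non-isometric regimes by the sign of $S[\mathbf{x}]$ is routine, and the freedom to use the constant sequence as a recovery sequence makes this $\Gamma$-convergence result considerably simpler than the analogous Lewicka--Pakzad derivation of the nonlinear plate limit.
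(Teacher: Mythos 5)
Your proof is correct and follows essentially the same route as the paper: $\Gamma$-convergence with respect to the weak topology on $\mathcal{A}$, Rellich--Kondrachov compactness to upgrade the quadratic strain terms to strong convergence, weak lower semicontinuity of the $L^2$ norm for both $S$ and $\mathfrak{B}$, the case split on whether $S[\mathbf{x}]$ vanishes, and the constant recovery sequence. The only difference is cosmetic: you package the lower semicontinuity of $S$ by observing that each full strain $\gamma_{ij}[\mathbf{x}_n]$ converges weakly in $L^2$, whereas the paper expands the square and passes to the limit in each cross term separately; both rest on the same two ingredients.
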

\begin{proof} $\,$
Let $\mathbf{x}_t=(f_t,g_t,\omega_t)\in \mathcal{A}$ such that $\mathbf{x}_t\rightharpoonup \mathbf{x}=(f,g,\omega)$.
\begin{enumerate}
\item It is a basic property of weak convergence that $\forall \eta\in L^2(B(R))$ if $\eta_t\rightharpoonup \eta$ in $L^2(B(R))$ then $\|\eta\|_{L^2}\leq \liminf_{t\rightarrow 0}\|\eta_t\|_{L^2}$ and $\exists C>0$ such that $\|\eta_t\|_{L^2}<C$.\\
\item It follows immediately from item 1 that
\begin{equation*}
\mathfrak{B}[\mathbf{x}]\leq \liminf_{t\rightarrow 0}\mathfrak{B}[\mathbf{x}_t].
\end{equation*}
\item
  Since $B(R)$ has finite area it follows that $\frac{\partial \omega_t}{\partial u}\in W^{1,p}(B(R))$ for $1\leq p \leq 2$ and thus by Rellich's compactness theorem we have that $\frac{\partial \omega_t}{\partial u}$ converges strongly to $\frac{\partial \omega}{\partial u}$ in the $L^q$ norm for $1\leq q<\infty$. Therefore for $1\leq q<\infty$  we have by the reverse triangle inequality that
  \begin{equation*}
  \left|\left\|\frac{\partial \omega_t}{\partial u}\right\|_{L^q}-\left\|\frac{\partial \omega}{\partial u}\right\|_{L^q}\right|\leq \left\|\frac{\partial \omega_t}{\partial u}-\frac{\partial \omega}{\partial u}\right\|_{L^q}
  \end{equation*}
  and thus $\lim_{t\rightarrow 0}\left\|\frac{\partial \omega_t}{\partial u}\right\|_{L^q}=\left\|\frac{\partial \omega}{\partial u}\right\|_{L^q}$. Therefore we have the following limits and inequalities:
  \begin{enumerate}[i]
  \item
  \begin{equation*}
\lim_{t\rightarrow 0}\left\|\frac{\partial f_t}{\partial u}\right\|_{L^2}^2\geq  \|f\|_{L^2}^2,
\end{equation*}
\item
\begin{eqnarray*}
\lim_{t\rightarrow 0}\int_{B(R)}\frac{\partial f_t}{\partial u}\left(\frac{\partial \omega_t}{\partial u}\right)^2\,dA&=&\lim_{t\rightarrow 0}\int_{B(R)}\frac{\partial f_t}{\partial u}\left[\left(\frac{\partial \omega_t}{\partial u}\right)^2-\left(\frac{\partial \omega}{\partial u}\right)^2\right]\,dA\\
&\,& +\lim_{t\rightarrow 0} \int_{B(R)}\frac{\partial f_t}{\partial u}\left(\frac{\partial \omega}{\partial u}\right)^2\,dA\\
&=& \int_{B(R)}\frac{\partial f}{\partial u}\left(\frac{\partial \omega}{\partial u}\right)^2\,dA,
\end{eqnarray*}
\item 
\begin{equation*}
\lim_{t\rightarrow 0}\int_{B(R)}\frac{\partial f_t}{\partial u}\frac{v^2}{3}\,dA=\int_{B(R)}\frac{\partial f}{\partial u}\frac{v^2}{3}\,dA,
\end{equation*}
\item 
\begin{equation*}
\lim_{t\rightarrow 0}\left\|\frac{\partial \omega_t}{\partial u}\right\|_{L^4}^4=\left\|\frac{\partial \omega}{\partial u}\right\|_{L^4}^4,
\end{equation*}
\item 
\begin{equation*}
\lim_{t\rightarrow 0}\int_{B(R)}\left(\frac{\partial \omega_t}{\partial u}\right)^2\frac{v^2}{3}\,dA=\int_{B(R)}\left(\frac{\partial \omega}{\partial u}\right)^2\frac{v^2}{3}\,dA.
\end{equation*}
\end{enumerate}

Therefore by items (i-v) it follows that 
\begin{equation*}
\liminf_{t\rightarrow 0}\int_{B(R)}\left(2\frac{\partial f_t}{\partial u}+\left(\frac{\partial \omega_t}{\partial u}\right)^2-\frac{v^2}{3}\right)^2\,dA\geq \int_{B(R)}\left(2\frac{\partial f}{\partial u}+\left(\frac{\partial \omega}{\partial u}\right)^2-\frac{v^2}{3}\right)^2\,dA.
\end{equation*}
Identical arguments can be used on the other strain terms to show that
\begin{equation*}
\mathcal{S}[\mathbf{x}]\leq\liminf_{t\rightarrow 0}\mathcal{S}[\mathbf{x}_t].
\end{equation*}
\end{enumerate}

If $\mathcal{S}[\mathbf{x}]=0$ then by item 2 we have that
\begin{equation*}
\liminf_{t\rightarrow 0}F_t[\mathbf{x}_t]=\liminf_{t\rightarrow 0}\frac{1}{t^2}\mathcal{S}[\mathbf{x}_t]+\liminf_{t\rightarrow 0}\mathfrak{B}[\mathbf{x}_t]\geq \mathfrak{B}[\mathbf{x}],
\end{equation*}
while if $\mathcal{S}[\mathbf{x}]\neq 0$ then by item 3 
\begin{equation*}
\liminf_{t\rightarrow 0}F_t[\mathbf{x}_t]\geq\liminf_{t\rightarrow 0}\frac{1}{t^2}\mathcal{S}[\mathbf{x}_t]+\liminf_{t\rightarrow 0}\mathfrak{B}[\mathbf{x}_t]\geq \liminf_{t\rightarrow 0}\frac{1}{t^2}\liminf_{t\rightarrow 0}\mathcal{S}[\mathbf{x}_t]=\infty.
\end{equation*}
Therefore, $\liminf_{t\rightarrow 0}F_t[\mathbf{x}_t]\geq F_0[\mathbf{x}]$ proving property 1 of the definition. Finally, if $\mathbf{y}\in \mathcal{A}$ then by selecting the sequence $\mathbf{y}_t=\mathbf{y}$ we prove property 2.
\end{proof}

\begin{lemma}
Let $a_t$ be the sequence in $\mathbb{R}^+$ defined by $a_t=\min_{\mathcal{A}}F_t$. Then, $\lim_{t\rightarrow 0}a_t$ exists and $0\leq\lim_{t\rightarrow 0}a_t\leq 2\pi$.
\end{lemma}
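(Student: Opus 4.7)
The plan is to combine monotonicity of the map $t \mapsto a_t$ with an explicit upper bound from a low-energy test configuration, and then invoke the monotone convergence of bounded sequences.

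First I would establish monotonicity. For any admissible $\mathbf{x}=(f,g,\omega)\in\mathcal{A}$, the stretching functional $\mathcal{S}[\mathbf{x}]\geq 0$, so $F_t[\mathbf{x}]=t^{-2}\mathcal{S}[\mathbf{x}]+\mathfrak{B}[\mathbf{x}]$ is pointwise nonincreasing in $t$. Taking infima over $\mathcal{A}$ yields $a_{t_1}\geq a_{t_2}$ whenever $t_1\leq t_2$, so $a_t$ is nondecreasing as $t\to 0^+$ and the limit exists in $(0,+\infty]$. The trivial lower bound $a_t\geq 0$ follows from $F_t\geq 0$.

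Next I would produce a uniform upper bound via the two-wave saddle $\omega^\star(u,v)=\tfrac{1}{2}(u^2-v^2)$ singled out just above the statement of the lemma. It solves the Monge-Amp\`ere equation (\ref{Monge-Ampere}) with Hessian eigenvalues $\pm 1$, so $\|D^2\omega^\star\|^2\equiv 2$ and $\mathfrak{B}[\omega^\star]=2\pi R^2$. To realize $\omega^\star$ as the vertical component of a stretching-free immersion, I would solve the three scalar equations $\gamma_{11}=\gamma_{12}=\gamma_{22}=0$ for the in-plane displacements $(f,g)$. An explicit polynomial ansatz, for instance
$$
f^\star(u,v)=\tfrac{1}{6}(uv^2-u^3),\qquad g^\star(u,v)=\tfrac{1}{6}(u^2v-v^3),
$$
satisfies the system identically (direct substitution using $\partial_u\omega^\star=u$, $\partial_v\omega^\star=-v$), and both functions lie in $W^{1,2}(B(R))$. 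Hence $\mathbf{x}^\star:=(f^\star,g^\star,\omega^\star)\in\mathcal{A}$ with $\mathcal{S}[\mathbf{x}^\star]=0$, yielding
$$
a_t\leq F_t[\mathbf{x}^\star]=\mathfrak{B}[\omega^\star]=2\pi R^2
$$
for every $t>0$; this is the advertised bound under the normalization $R=1$ implicit in the statement.

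Combining the two steps, $a_t$ is monotone nondecreasing as $t\to 0$ and bounded in $[0,2\pi]$, so the limit exists in that interval. The main obstacle is the second step: finding in-plane displacements that annihilate the stretching. This is three scalar constraints on the two unknowns $(f,g)$, so solvability is \emph{a priori} a compatibility question; it is in fact guaranteed precisely because $\omega^\star$ solves the Monge-Amp\`ere equation (which is the lowest-order matching of the target Gaussian curvature), and in this polynomial case one integrates the equations directly. No delicate analysis is required beyond verifying the explicit ansatz.
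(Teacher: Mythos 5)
Your proof is correct and follows essentially the same route as the paper: monotonicity of $t\mapsto a_t$ (the paper compares the minimizers $\mathbf{x}_s,\mathbf{x}_t$ directly, you take infima of the pointwise-monotone $F_t[\mathbf{x}]$, which is marginally cleaner since it does not presuppose attainment) combined with the upper bound from the $n=2$ saddle of equation (\ref{linear-energy}) and monotone convergence. Your explicit in-plane displacements $f^\star=\tfrac{1}{6}(uv^2-u^3)$, $g^\star=\tfrac{1}{6}(u^2v-v^3)$ do verify $\gamma_{11}=\gamma_{12}=\gamma_{22}=0$ and make explicit a compatibility step the paper leaves implicit; note also that the correct bound is $2\pi R^2$ as you compute, whereas the paper's statement and proof write $2\pi$ and $2\pi R$ respectively.
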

\begin{proof}
Fix $s,t>0$ such that $s<t$ and let $\mathbf{x}_s,\mathbf{x}_t\in \mathcal{A}$ such that $\min_{\mathcal{A}}F_t=F_t[\mathbf{x}_t]$ and $\min_{\mathcal{A}}F_s=F_s[\mathbf{x}_s]$. Then,
\begin{equation*}
F_t[\mathbf{x}_t]\leq F_t[\mathbf{x}_s]=\frac{1}{t^2}S[\mathbf{x}_s]+\mathfrak{B}[\mathbf{x}_s]< \frac{1}{s^2}S[\mathbf{x}_s]+\frak{B}[\mathbf{x}_s]=F_s[\mathbf{x}_s].
\end{equation*}
Therefore, the sequence $a_t=\min_{\mathcal{A}}F_t$ is monotone increasing as $s\rightarrow 0$ and by (\ref{linear-energy}), with $n=2$, satisfies $0\leq a_t\leq 2\pi R$. Therefore by the Bolzano--Weierstrass theorem $a_t$ converges.
\end{proof}

\begin{definition}
Define $\mathcal{A}_0\subset \mathcal{A}$ by
\begin{equation*}
 \mathcal{A}_0=\left\{(f,g,\omega)\in \mathcal{A}: f(0)=g(0)=\omega(0)=0 \text{ and } \left.\frac{\partial \omega}{\partial u}\right|_{(0,0)}=\left.\frac{\partial \omega}{\partial v}\right|_{(0,0)}\right\}=0.
 \end{equation*}
\end{definition}

\begin{remark}
The above definition fixes the origin of a configuration and aligns the normal at the origin with the $z$-axis of $\mathbb{R}^3$. By a rigid translation and rotation any element $x\in\mathcal{A}$ can naturally be identified with an element of $\mathcal{A}_0$.
\end{remark}

\begin{theorem}
If $\mathbf{x}_t\in \mathcal{A}_0$ is a sequence such that $\min_{\mathcal{A}_0}F_t=F_t[\mathbf{x}_t]$ then
\begin{equation}
2\pi R=\min_{\mathcal{A}_0}=\lim_{t\rightarrow 0}F_t[\mathbf{x}_t]
\end{equation}
and every limit of convergent subsequence of $\mathbf{x}_t$ is a minimum point for $F_0$.
\end{theorem}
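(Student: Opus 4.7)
The plan is to apply the abstract principle that $\Gamma$-convergence plus compactness of a minimizing sequence implies convergence of minimum values and subsequential convergence of minimizers. The preceding proposition already supplies $\Gamma$-$\lim F_t = F_0$, so three ingredients remain: (i) identify an explicit minimizer $\mathbf{x}^*\in\mathcal{A}_0$ of $F_0$ realizing the claimed value; (ii) use it as a constant recovery sequence to upper bound $\limsup F_t[\mathbf{x}_t]$; (iii) extract a convergent subsequence from $\{\mathbf{x}_t\}$ and invoke the liminf inequality to obtain the matching lower bound.

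For (i), I would take the 2-wave saddle $\omega^*(u,v)=\tfrac12(u^2-v^2)$, which solves the Monge-Ampere equation (\ref{Monge-Ampere}) with Hessian eigenvalues $\pm 1$, so $\|D^2\omega^*\|^2\equiv 2$. Every solution of (\ref{Monge-Ampere}) obeys $\|D^2\omega\|^2 = k_1^2+k_2^2 \geq 2|k_1k_2| = 2$, so $\omega^*$ minimizes the bending density pointwise, which is consistent with the evaluation of (\ref{linear-energy}) at $n=2$ and with the upper bound of the preceding lemma. The compatibility equations $\gamma_{11}=\gamma_{12}=\gamma_{22}=0$ can then be solved by direct integration, producing polynomial in-plane displacements $f^*,g^*$ which, after subtracting off the linear part that is pinned by the origin conditions, belong to $\mathcal{A}_0$. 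This gives $\mathbf{x}^*=(f^*,g^*,\omega^*)\in\mathcal{A}_0$ with $S[\mathbf{x}^*]=0$, hence $F_t[\mathbf{x}^*]=\mathfrak{B}[\omega^*]=\min_{\mathcal{A}_0} F_0$ for every $t$. For (ii), the constant sequence $\mathbf{y}_t\equiv\mathbf{x}^*$ serves as a recovery sequence: minimality of $\mathbf{x}_t$ forces $F_t[\mathbf{x}_t]\le F_t[\mathbf{x}^*]=\min_{\mathcal{A}_0}F_0$, and hence $\limsup_{t\to 0}F_t[\mathbf{x}_t]\le \min_{\mathcal{A}_0}F_0$.

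The main work is step (iii). From the a priori bound of step (ii), $\mathfrak{B}[\omega_t]\le F_t[\mathbf{x}_t]\le \min_{\mathcal{A}_0}F_0$, together with the origin pinning in $\mathcal{A}_0$ and Poincaré's inequality applied twice, I would derive a uniform $W^{2,2}$ bound on $\omega_t$. The 2D Sobolev embedding $W^{2,2}\hookrightarrow W^{1,q}$ for every $q<\infty$ then controls the nonlinear terms $(\partial_u\omega_t)^2$, $\partial_u\omega_t\,\partial_v\omega_t$, $(\partial_v\omega_t)^2$ strongly in $L^2$. Coupling this with $S[\mathbf{x}_t]=O(t^2)$ and applying Korn's inequality to the linear part $Df_t+Df_t^T$ and $Dg_t+Dg_t^T$ (the origin normalization of $f_t,g_t$ eliminating the infinitesimal rigid-motion kernel) yields uniform $W^{1,2}$ bounds on $f_t$ and $g_t$. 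A diagonal subsequence then converges weakly to some $\mathbf{x}^{**}\in W^{1,2}\times W^{1,2}\times W^{2,2}$. Applying the liminf inequality from the previous proposition,
\begin{equation*}
F_0[\mathbf{x}^{**}]\;\le\;\liminf_{t_k\to 0}F_{t_k}[\mathbf{x}_{t_k}]\;\le\;\limsup_{t\to 0}F_t[\mathbf{x}_t]\;\le\;\min_{\mathcal{A}_0}F_0,
\end{equation*}
so the sandwich collapses: $\mathbf{x}^{**}$ is a minimizer of $F_0$, $\lim_t F_t[\mathbf{x}_t]$ exists and equals $\min_{\mathcal{A}_0}F_0$, and the same conclusion applies to every convergent subsequence of $\{\mathbf{x}_t\}$.

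The principal obstacle is the compactness step for the in-plane displacements: $f_t$ and $g_t$ enter the energy only through the nonlinear strains $\gamma_{ij}$, which mix them with the $D\omega\otimes D\omega$ term. Making the Korn-type estimate rigorous therefore requires one to first absorb the $D\omega_t\otimes D\omega_t$ piece using the uniform $W^{2,2}$ control of $\omega_t$ and the 2D Sobolev embedding, and then to use the origin normalization built into $\mathcal{A}_0$ to eliminate the rigid-motion kernel of the linearized strain. Once this bound is in hand, the rest of the argument is the textbook $\Gamma$-convergence passage from the liminf/limsup inequalities to convergence of minima and minimizers.
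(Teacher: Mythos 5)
Your proposal is correct and follows essentially the same route as the paper: an a priori bound $F_t[\mathbf{x}_t]\leq 2\pi R$ from the $n=2$ saddle competitor, weak compactness of the sublevel set via Poincar\'e and Banach--Alaoglu, the liminf inequality for the lower bound, and a recovery-sequence argument for the upper bound (the paper works with $\delta$-approximate minimizers of $F_0$ where you use the exact saddle; the two are interchangeable here). Your extra care in justifying the gradient bounds (Sobolev embedding to control $D\omega_t\otimes D\omega_t$, then a Korn-type estimate for $f_t,g_t$) goes beyond the paper's one-line assertion of those bounds, though note that pointwise vanishing of $(f,g)$ at the origin kills translations but not infinitesimal in-plane rotations, so the rigid-motion kernel is not fully eliminated --- a point both you and the paper elide.
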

\begin{proof}
Let $\mathbf{x}=(f,g,\omega)\in \mathcal{A}_0$ such that for all $t\in(0,1)$, $F_t[\mathbf{x}]\leq 2\pi R$. Then there exists $C_1>0$ such that $\|\nabla f\|_{L^2},\|\nabla g\|_{L^2}\|\nabla \omega\|_{L^4}<C_1$. Therefore by Poincar\'{e}'s inequality there exists $C_2$ such that $\|f\|_{L^2},\|g\|_{L^2},\|\omega\|_{L^2}<C_2$ and thus by the Banach-Alaoglu theorem the set $K=\{\mathbf{x}\in \mathcal{A}_0:\forall t\in(0,1), F_t[\mathbf{x}]\leq 2\pi R\}$ is weakly compact.
\begin{enumerate}
\item Let $\mathbf{x}_t$ be a sequence such that $\min_{\mathcal{A}_0}F_t=F_t[\mathbf{x}_t]$. By (\ref{linear-energy}) it follows that $F_t[\mathbf{x}_t]\leq 2\pi R$ and thus by compactness of $K$ there exists a subsequence $\mathbf{x}_{t_k}$ such that $\mathbf{x}_{t_k}\rightharpoonup \mathbf{x}^*\in \mathcal{A}_0$. Therefore, by (\ref{gamma-lsc}) and the above lemma we have that 
\begin{equation*}
\min_{\mathcal{A}_0}F_0\leq F_0[\mathbf{x}^*]\leq \liminf_{k\rightarrow 0} F_k[\mathbf{x}_{t_k}]=\lim_{k\rightarrow 0}\min_{\mathcal{A}_0}F_{t_k}=\lim_{t\rightarrow 0} \min_{\mathcal{A}_0}F_t.
\end{equation*} 
\item Fix $\delta >0$ and let $\mathbf{y}\in \mathcal{A}$ such that $F_0[\mathbf{y}]\leq \min_{\mathcal{A}_0}F_0+\delta$. Then if $\mathbf{y}_j$ is a sequence satisfying (\ref{recovery}) then 
\begin{equation*}
\min_{\mathcal{A}_0}F_0+\delta \geq F_0[\mathbf{y}]\geq \limsup_{t\rightarrow 0}F_t[\mathbf{y_t}]\geq \limsup_{t\rightarrow 0}\min_{\mathcal{A}_0}F_t=\lim_{t\rightarrow 0}\min_{\mathcal{A}_0}F_t.
\end{equation*}
By the arbitrariness of $\delta$ if follows that 
\begin{equation*}
\min_{\mathcal{A}_0}F_0\geq \lim_{t\rightarrow 0}\min_{\mathcal{A}_0}F_t.
\end{equation*}
\end{enumerate}
Therefore, by items 1 and 2 it follows that
\begin{equation*}
\min_{\mathcal{A}_0}F_0=\lim_{t\rightarrow 0}\min_{\mathcal{A}_0}F_t.
\end{equation*}
Furthermore, if $\mathbf{x}_t$ is sequence as defined in item 1 and $\mathbf{x}_{t_k}$ is a subsequence such that $\mathbf{x}_{t_k}\rightharpoonup \mathbf{x}^*$ then 
\begin{equation*}
\lim_{t\rightarrow 0}\min_{\mathcal{A}_0}F_t=\min_{\mathcal{A}_0}F_0\leq F_0[\mathbf{x}^*]\leq \lim_{t\rightarrow 0} \min_{\mathcal{A}_0}F_t
\end{equation*}
which proves that
\begin{equation*}
\min_{\mathcal{A}_0}=F_0[\mathbf{x}^*].
\end{equation*}
\end{proof}

 Consequently, the small slopes approximation will predict that, for small thickness, the configuration of the sheet will converge to a quadratic saddle $\omega = \frac{1}{2}(u^2-v^2)$. This is in disagreement with the experimental observation that the disks can obtain an arbitrary number of waves \cite{an-experimental-study-of-shape-transitions-and-energy-scaling-in-thin-non-euclidean-plates}. However, if we consider the amplitude $A_n$ of an $n$ wave configuration, we get the scaling law
\begin{equation}
A_n(R)=\frac{1}{2}\tan\left(\frac{\pi}{2n}\right)R^2\approx \frac{\pi}{4}\frac{R^2}{n},
\end{equation}
which does agree with the experiment \cite{an-experimental-study-of-shape-transitions-and-energy-scaling-in-thin-non-euclidean-plates}. 

A key difference between exact isometries and the small slopes approximation is captured by the following observation. The curvatures of the surfaces given by (\ref{quad-surface}) are bounded by $\max(a,a^{-1}) \sim O(1)$ independent of $R$, the radius of the disk. This is in contrast with our result (\ref{eq:sclng}) which shows that for isometric immersions, the curvature has to grow exponentially in the radius $R$. A qualitative expression of this distinction is that the analogs of Hilbert's and Efimov's theorems do not hold for the small slopes approximation. Consequently, in the next section we look to extend our construction by looking for periodic surfaces that are exactly locally isometric to $\mathbb{H}^2$ and for small  $R$ are well approximated by the small slopes solution.

%
%
\section{Periodic Amsler Surfaces} \label{sec:amsler}

To mimic the solutions to the small slope approximation we want to construct a hyperbolic surface that has two asymptotic curves that are straight lines and intersect at the origin of $U$. Hyperbolic surfaces satisfying this property exist, they are called Amsler surfaces \cite{Bobenko}, and form a one parameter family of surfaces $\mathcal{A}_{\theta}$ that are uniquely determined by the angle $\theta$ between the asymptotic lines \cite{Amsler}. As in the construction of surfaces in the small slopes approximation, if the angle $\theta$ between the asymptotic lines satisfies $\theta=\frac{\pi}{n}$ we can take the odd periodic extension of the piece of the Amsler surface bounded between the asymptotic lines and form a periodic profile with $n$ waves. We call the surfaces constructed in this manner periodic Amsler surfaces.

The key to generating periodic Amsler surfaces is through the Sine-Gordon equation (\ref{Sine-Gordon}) and the similarity transformation $z=2\sqrt{uv},\, \varphi(z)=\phi(u,v)$ which transforms (\ref{Sine-Gordon}) into a Painlev\'e III equation in trigonometric form:
\begin{equation} \label{Painleve-Equation}
\varphi^{\prime \prime}(z)+\frac{1}{z}\varphi^{\prime}(z)-\sin(\varphi(z))=0,
\end{equation}
where $\prime$ denotes differentiation with respect to $z$ \cite{Amsler, Bobenko}. By imposing the initial conditions $\varphi(0)=\frac{\pi}{n}$ and $\varphi^{\prime}(0)=0$ solutions to (\ref{Painleve-Equation}), denoted by $\varphi_n(z)$, generate surfaces such that the $u$ and $v$ axis correspond to the asymptotic lines of the surface. Therefore, the piece of the surface we are extending periodically is parameterized in the first quadrant of the $u\--v$ plane and all of its geometric quantities are determined by $\varphi_n$. 

\begin{figure}[htp]
\begin{center}
\includegraphics[width=\textwidth]{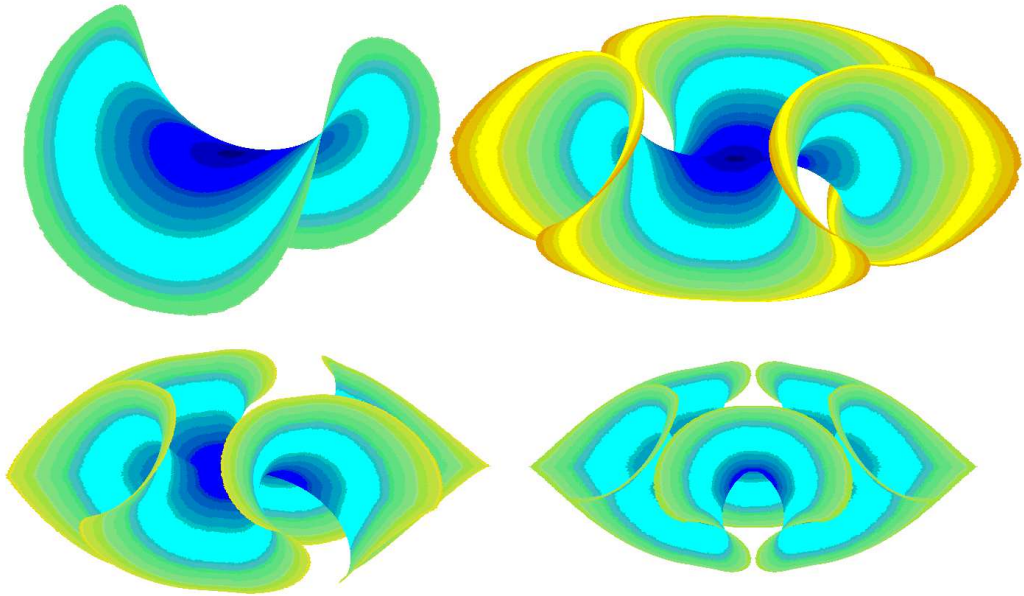}
\caption{Periodic Amsler surfaces with $n=2,3,4$ and $5$ waves respectively. These surfaces are not drawn to scale, but are close to the maximum radii presented in figure \ref{amslerenergy:bendenergy(r)}. The coloring of the disks corresponds to contours of arclength data and indicate how the different geodesic circles cut from these surfaces would appear. These surfaces are in fact not true Amsler surfaces but are discrete Amsler surfaces creating using the algorithm presented in \cite{Bobenko-1999}. }\label{amslersurfaces} 
\end{center}
\end{figure}  

\subsection{Maximum radius of periodic Amsler surfaces}
The geodesic equations (\ref{geodesic-equations}) under this similarity transformation take the form: 
\begin{equation} \label{geodesics}
\begin{array}{c}
\frac{d^2u}{dt^2}+\frac{1}{\sqrt{uv}}\frac{d\varphi_n}{dz}\left(\cot(\varphi_n)v\left(\frac{du}{dt}\right)^2-\csc(\varphi_n)u\left(\frac{dv}{dt}\right)^2\right)=0,\\
\frac{d^2v}{dt^2}+\frac{1}{\sqrt{uv}}\frac{d\varphi_n}{dz}\left(\cot(\varphi_n)u\left(\frac{dv}{dt}\right)^2-\csc(\varphi_n)v\left(\frac{du}{dt}\right)^2\right)=0,
\end{array}
\end{equation}
with initial conditions $u(0)=v(0)=0$, $\frac{du}{dt}(0)=\cos(\psi),$ and $\frac{dv}{dt}(0)=\sin(\psi)$ for $\psi\in[0,\pi/2]$. Notice that these equations have a singularity at the point $z_n=\varphi_n^{-1}(\pi)$. This is precisely where the immersion becomes singular and thus the singular curve is given in the $u\--v$ plane by 
\begin{equation}
\sqrt{v}=\frac{\varphi_n^{-1}(\pi)}{2\sqrt{u}}.
\end{equation} 
Once the singular curve is determined we can calculate the maximum radius of a periodic Amsler surface with $n$ waves by determining the shortest geodesic that intersects the singular curve.

We can obtain a scaling for $\varphi_n^{-1}(\pi)$ by multiplying equation (\ref{Painleve-Equation}) by $\varphi^{\prime}(z)$ to get 
\begin{equation}
\frac{1}{2}\frac{d(\varphi^{\prime})^2}{d z}=-\frac{d\varphi}{dz}\frac{d\cos \varphi}{d \varphi}-\frac{1}{z}\varphi^{\prime}(z)^2\leq -\frac{d \cos(\varphi)}{d z}.
\end{equation}
Integrating, we get that
\begin{equation}
\int_{\pi/n}^{\pi}\frac{d \varphi}{\sqrt{2(1-\cos(\varphi))}}\leq \int_{\pi/n}^{\pi} \frac{d \varphi}{\sqrt{2(\cos(\pi/n)-\cos(\varphi)}}\leq \varphi_n^{-1}(\pi).
\end{equation}
Therefore,
\begin{equation}\label{pi-scaling}
\ln\left(\cot\left(\frac{\pi}{4n}\right)\right)\leq \varphi_n^{-1}(\pi).
\end{equation}

 We can use this scaling to prove that we can create arbitrary large disks from these periodic Amsler surfaces.  First noting that the same argument used to obtain (\ref{pi-scaling}) implies that $\displaystyle{\lim_{n \rightarrow \infty}\varphi_{n}^{-1}(\pi/2)=\infty}$. Now, let $\alpha(t)=\mathbf{x}((u(t),v(t)))$ be a geodesic lying on a periodic Amsler surface with generating angle $\theta=\pi/n$. Suppose $\alpha(t)$ starts at the origin and travels to the curve $\displaystyle{\sqrt{v}=\frac{\varphi_n^{-1}(\pi/2)}{2\sqrt{u}}}$ terminating at the point $(u_f,v_f)$ and without loss of generality assume that $u(t)$ is a function of $v(t)$. Then, the arclength of $\alpha(t)$ satisfies:
\begin{equation} \label{Amsler-Radius-Scaling}
d(0,(u_f,v_f))=\int_0^{u_f}\sqrt{1+2\cos(\varphi)\frac{du}{dv}+\left(\frac{du}{dv}\right)^2}\,dv>\int_0^{u_f}\,dv=u_f.
\end{equation} 
Thus, since $u_f\rightarrow \infty$ as $n\rightarrow \infty$ we have proved the following proposition.
\begin{proposition}
Let $D$ be a disk of radius $R$ in the hyperbolic plane. There exists a $W^{2,2}$ isometric immersion $\mathbf{x}:D\rightarrow U\subset \mathbb{R}^3$ such that $U$ is a subset of a periodic Amsler surface.
\end{proposition}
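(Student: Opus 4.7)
The plan is to combine the two ingredients already set up in this section---the divergence $\varphi_n^{-1}(\pi/2)\to\infty$ as $n\to\infty$ (noted immediately after (\ref{pi-scaling})) and the arclength estimate (\ref{Amsler-Radius-Scaling})---into a quantitative statement: for any $R$ there is an $n$ such that the geodesic ball of radius $R$ centered at the origin of the $n$-wave periodic Amsler surface lies strictly inside the smooth region. Because the singular curve in the first-quadrant Amsler patch is pushed off to infinity as $n$ grows, while the induced metric controls coordinate displacements, one can arrange the geodesic ball to avoid the singularity.

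Concretely, I would work with the ``safety curve'' $C_n\colon 2\sqrt{uv}=\varphi_n^{-1}(\pi/2)$, which bounds the smaller smooth region $\{\varphi\leq\pi/2\}\subset\{\varphi<\pi\}$. For any point $(u_f,v_f)\in C_n$ reached by a geodesic from the origin, applying (\ref{Amsler-Radius-Scaling}) together with the analogous inequality obtained by interchanging the roles of $u$ and $v$ (legitimate since (\ref{Painleve-Equation}) and (\ref{Tcebyshef-Metric}) are symmetric in the two coordinates) yields $d(0,(u_f,v_f))>\max(u_f,v_f)\geq\sqrt{u_f v_f}=\varphi_n^{-1}(\pi/2)/2$. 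Hence the geodesic distance from the origin to $C_n$ is at least $\varphi_n^{-1}(\pi/2)/2$, and choosing $n$ so that $\varphi_n^{-1}(\pi/2)>2R$ forces the first-quadrant geodesic ball of radius $R$ to remain in $\{\varphi\leq\pi/2\}$.

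Next I would assemble $2n$ rigid copies of this patch into the periodic Amsler surface, adjacent copies being joined by reflection across the tangent plane along their common asymptotic line (a rigid motion of $\mathbb{R}^3$, since asymptotic lines of Amsler surfaces are straight segments in $\mathbb{R}^3$). The resulting surface has $2n$-fold rotational symmetry about the origin, and the total opening angle is $2n\cdot(\pi/n)=2\pi$, so the sectors close up smoothly around the center. The geodesic ball of radius $R$ on the periodic surface then decomposes into $2n$ congruent pieces, each contained in the smooth part of its sector by the previous step, producing an isometric immersion $\mathbf{x}:D\to U\subset\mathbb{R}^3$ of the hyperbolic disk of radius $R$.

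Finally, I would check that $\mathbf{x}\in W^{2,2}$. Each sector is smooth, so the only issue is the asymptotic seams: across each seam the induced metric is continuous, and because the normal curvature along an asymptotic line vanishes, the gluing reflection fixes the common tangent plane, so $\mathbf{x}$ is $C^1$ globally. The second fundamental form can jump across a seam, but seams are one-dimensional subsets of $D$, hence of two-dimensional measure zero, so they contribute nothing to the $L^2$ norm of $D^2\mathbf{x}$. I expect the main obstacle to be Step~2: the raw inequality (\ref{Amsler-Radius-Scaling}) controls arclength only in terms of a single asymptotic coordinate, so one must symmetrize to obtain the product bound $\sqrt{u_f v_f}$, which is what couples the geodesic distance to the divergent quantity $\varphi_n^{-1}(\pi/2)$ and thereby drives the entire construction.
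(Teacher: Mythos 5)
Your argument follows the paper's proof: both use the divergence of $\varphi_n^{-1}(\pi/2)$ as $n\to\infty$ together with the arclength lower bound (\ref{Amsler-Radius-Scaling}) to show that for $n$ large enough the singular curve lies beyond geodesic distance $R$ from the origin, and then assemble the disk by odd periodic extension across the straight asymptotic lines. Your symmetrized estimate $d(0,(u_f,v_f))>\max(u_f,v_f)\geq\sqrt{u_fv_f}=\varphi_n^{-1}(\pi/2)/2$ is a slightly cleaner way to make the distance bound uniform over all directions than the paper's closing remark that $u_f\rightarrow\infty$, but the substance of the argument is the same.
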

 
Figure \ref{scaling} is a plot of several solutions to equation (\ref{Painleve-Equation}) and includes the scaling of the radius of the largest disk with $n$ waves that can be embedded in $\mathbb{R}^3$. We can see qualitatively that the maximum radius grows logarithmically with $n$, a result that is confirmed by equation (\ref{pi-scaling}). The geodesics for the particular case when $n=2$  are plotted in figure \ref{amsler-geodesics:geodesics} along with the geodesic circles for $n=3$ in figure \ref{Amsler-Geodesics:circles}.  

\begin{figure}[htp]
\begin{center}
\includegraphics[width=.8\textwidth]{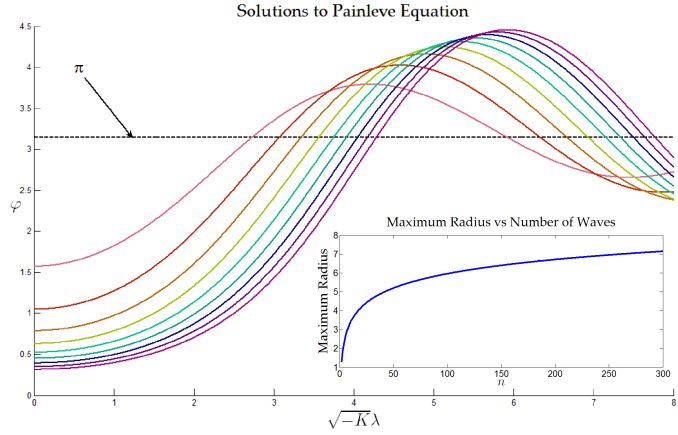}
\caption{A plot of $\varphi$ versus $r$ satisfying the initial conditions $\varphi(0)=\pi/n$ for $n=2, \ldots 10$. The dashed horizontal line corresponds to where $\varphi(r)=\pi$. At this point a principal curvature diverges and by choosing a higher value of $n$ a larger disk can be created. The inset plot illustrates how the maximum radius scales with $n$. We can see that the radius grows very slowly with $n$ and it looks to be growing approximately at a logarithmic rate.}  \label{scaling}
\end{center}
\end{figure}

\begin{figure}[htp]
\begin{center}
\subfigure[]{
\includegraphics[width=.4\textwidth]{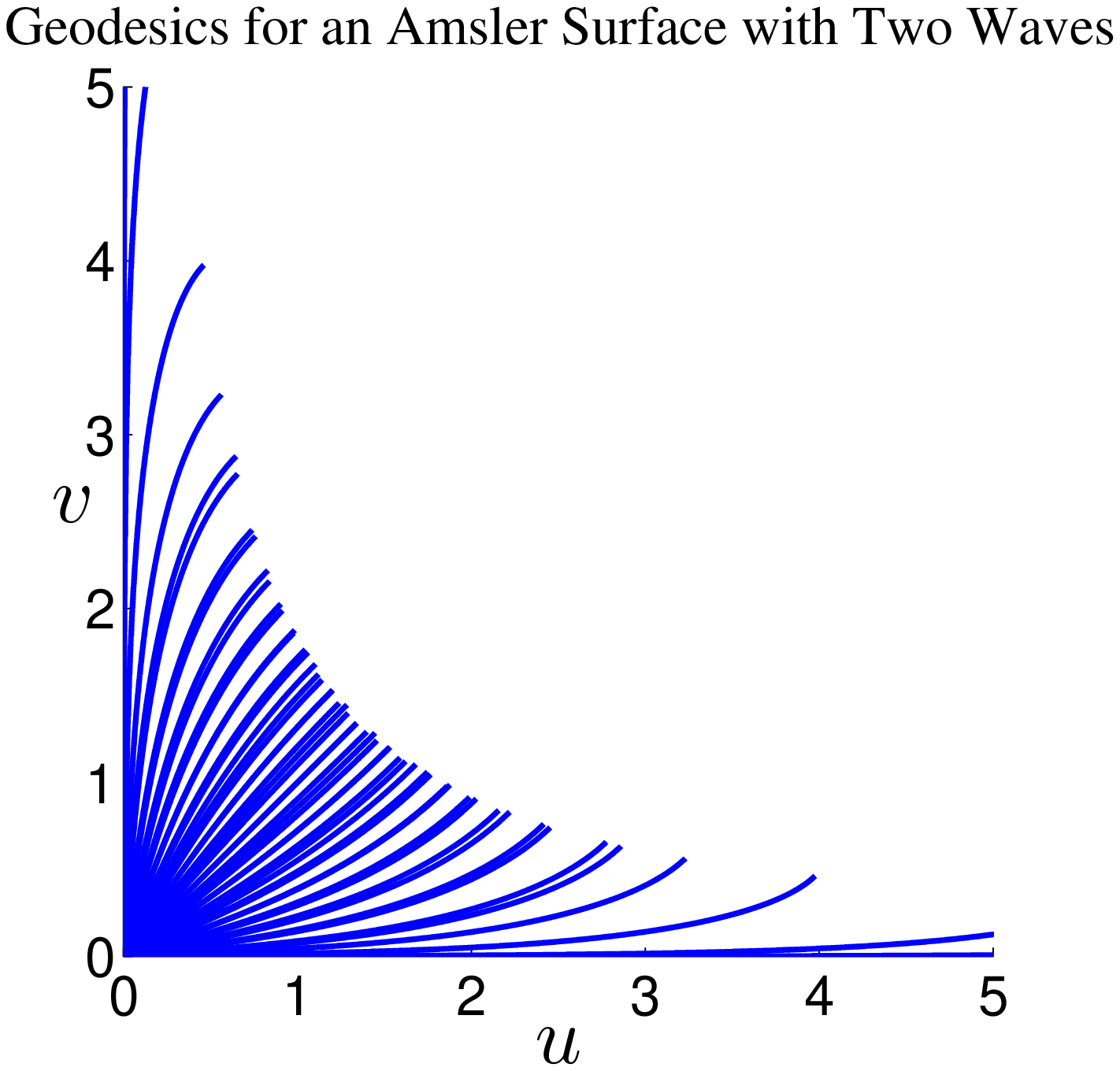}
\label{amsler-geodesics:geodesics}
}
\subfigure[]{
\includegraphics[width=.4\textwidth]{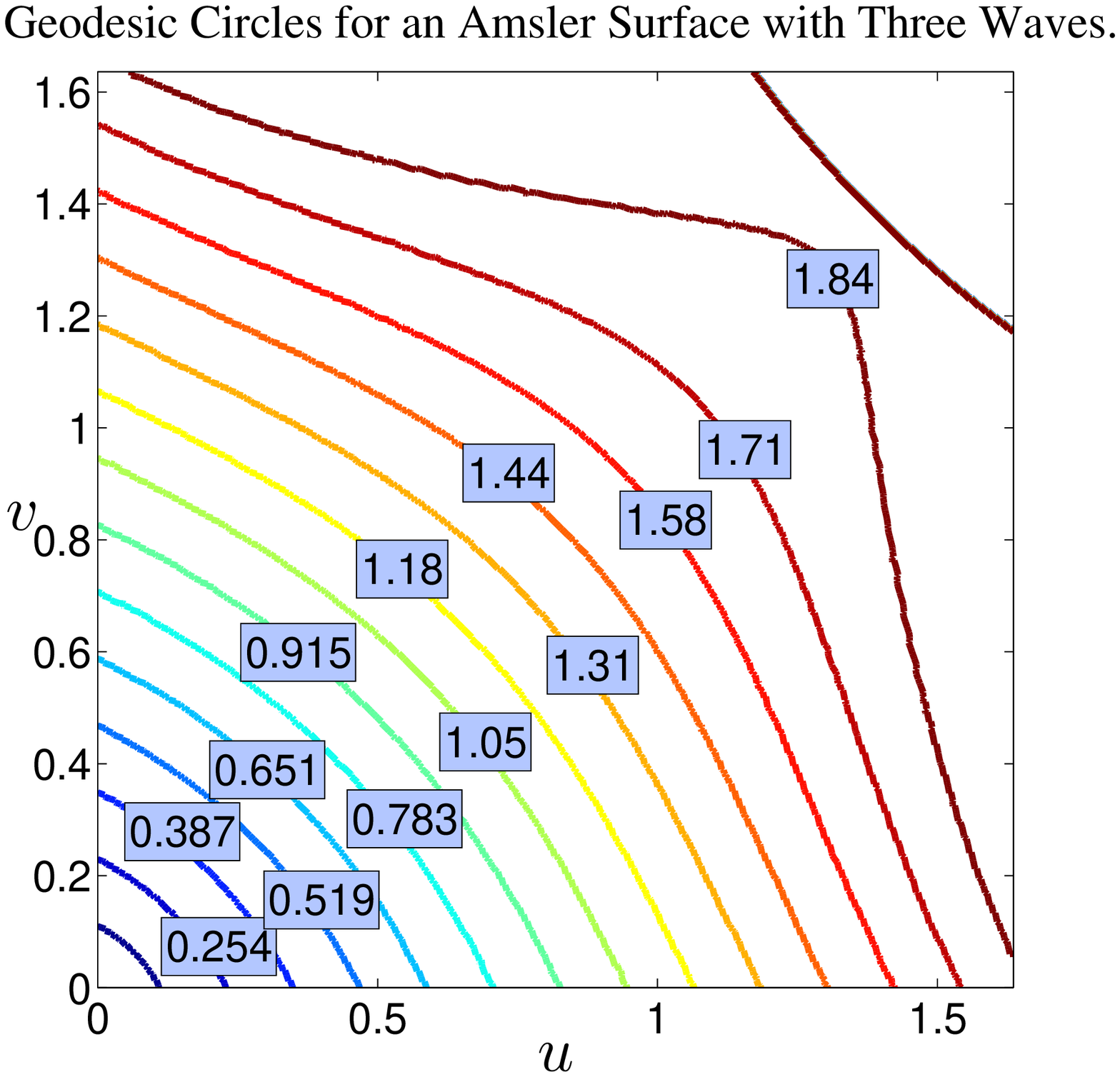}
\label{Amsler-Geodesics:circles}
}
\caption{\subref{amsler-geodesics:geodesics} Plot of the geodesics in the $u\--v$ plane of the Amsler surface for $n=2$ waves. The endpoints of the geodesics lie on the curve $\sqrt{v}=\frac{\varphi_n^{-1}(\pi)}{2\sqrt{u}}$, which forms the boundary where a principal curvature diverges. \subref{Amsler-Geodesics:circles} Geodesic circles plotted in the $u\--v$ plane when $n=3$. These curves where calculated by contouring the arclength of geodesics. The curve in the upper right hand corner is the singular edge of the Amsler surface.} \label{Amsler-Geodesics}
\end{center}
\end{figure}

\subsection{Elastic energy of periodic Amsler surfaces}
To compute the elastic energy using equation (\ref{bending-energy}) we need to determine the dependence of $u$ and $v$ on $r$ and $\Psi$. Let $j_1$ and $j_2$ denote unit vectors in the $u\--v$ plane that are aligned with the $u$ and $v$ axis respectively. If we let $\mathbf{x}$ denote the parameterization of the Amsler surface, then the pushforward or differential of $\mathbf{x}$, denoted $\mathbf{x}_*$, is a linear map. Furthermore, since $\mathbf{x}$ is an isometry and the images of $j_1$ and $j_2$ under $\mathbf{x}_*$ make an angle of $\pi/n$ we have without loss of generality that
\begin{equation}
\mathbf{x}_*j_1=e_1 \text{ and } \mathbf{x}_*j_2=\cos(\pi/n)e_1+\sin(\pi/n)e_2,
\end{equation}
where $e_1$ and $e_2$ are the standard basis elements for the tangent plane at $\mathbf{x}(0,0)$. Consequently, if we let $\alpha(t)=(u(t),v(t))$ be a geodesic defined in the $u\--v$ plane satisfying $\alpha^{\prime}(0)=\cos(\psi)j_1+\sin(\psi)j_2$  then by linearity we have that
\begin{equation}
\mathbf{x}_*(\alpha^{\prime}(0))=\left(\cos(\theta)+\sin(\theta)\cos(\pi/n)\right)e_1+\sin(\theta)\sin(\pi/n)e_2.
\end{equation} 
Therefore, we can conclude that the polar angle $\Psi$ for this geodesic is given by
\begin{equation}\label{Polar-Angle}
\Psi=\arctan\left(\frac{\sin(\psi)\sin(\pi/n)}{\cos(\psi)+\sin(\psi)\cos(\pi/n)}\right).
\end{equation}

Now, we can numerically integrate the elastic bending energy for the periodic Amsler surfaces. First, by specifying $\Psi$ and numerically solving (\ref{Polar-Angle}) for $\psi$ we can generate initial conditions for equations (\ref{geodesics}) which can then be numerically solved to determine $u(t)$ and $v(t)$.  Then, by fixing $r$ and numerically solving the arclength equation
\begin{equation}
r=\int_0^T\sqrt{\left(\frac{du}{dt}\right)^2+2\cos(\varphi)\frac{du}{dt}\frac{dv}{dt}+\left(\frac{dv}{dt}\right)^2}\,dt,
\end{equation}   
for $T$ we can calculate the values $u(T)$ and $v(T)$ which correspond to the coordinates $(\Psi,r)$. Finally by setting up a mesh on the rectangle $(\Psi,r)\in [0,2\pi]\times [0,R]$ and using the above process to determine $u$ and $v$ we can numerically integrate equation (\ref{bending-energy}). These results are summarized in figure \ref{amslerenergy}. 

\begin{figure}[htp] 
\begin{center}

\subfigure[]{
\includegraphics[width=.7\textwidth]{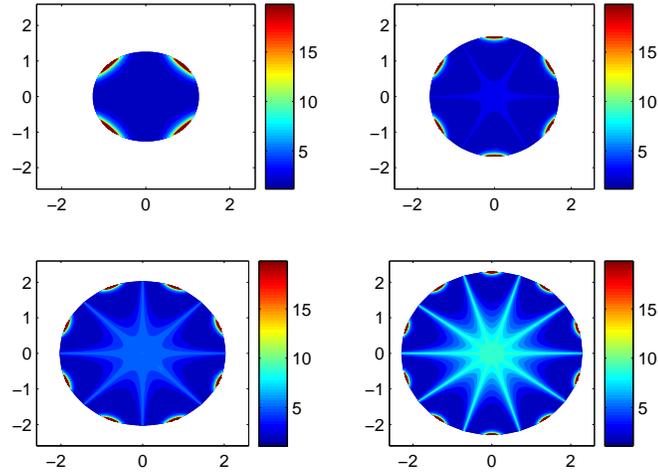}
\label{amslerenergy:discs}
}

\subfigure[]{
\includegraphics[width=.7\textwidth]{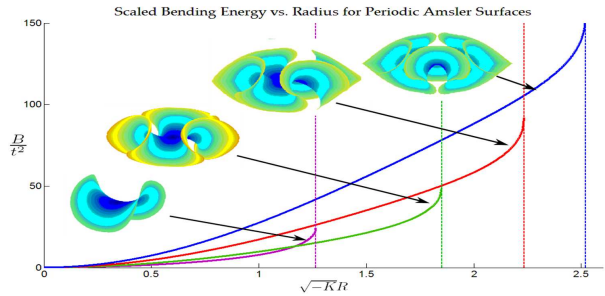} 
\label{amslerenergy:bendenergy(r)}
}

\end{center}
\caption{\subref{amslerenergy:discs}  A representation of the geodesic disks cut out of Amsler surfaces with $n=2,\ldots, 5$ waves colored by $k_1^2+k_2^2$ in which the radius and polar angle correspond to the geodesic radius $r$ and polar angle $\Psi$. These disks each have a maximum radii of $R=1.2654,\, 1.8505,\, 2.2342,\, 2.5199$ beyond which one of the principal curvatures diverges. We can see that these disks have lower energy than there counterparts lying in the hyperboloid of revolution. Moreover, the energy is concentrated in small regions near the singular edge of the disk. \subref{amslerenergy:bendenergy(r)} The scaled bending energy of periodic Amsler surfaces with $n=2,\ldots, 5$ waves plotted versus geodesic radius. The vertical dashed lines correspond to the radius where the surface with $n$ waves can no longer be isometrically embedded.}
\label{amslerenergy}
\end{figure}

%
%
\section{Discussion}
\label{sec:discuss}

Free non-Euclidean thin elastic sheets arise in a variety of physical \cite{Sharon2002, Shaping-of-elastic-sheets-by-prescription-of-non-euclidean-metrics,Cerda-2003} and biological \cite{UtpalNath02282003, Geometry-And-Elasticity-of-srips-and-flowers, Dervaux(2008)}  systems. The morphology of these sheets is usually modeled as the equilibrium configurations for an appropriate elastic energy. One approach is to model the sheet as an abstract manifold with a prescribed {\em target metric} $\mathbf{g}$, which is then used to define strains and hence an elastic energy for a configuration. 

In this paper we considered the geometric problem of immersing disks with {\em constant negative curvature} into $\mathbb{R}^3$. There are two settings for this problem which are relevant. One is the problem of ``exact" isometric immersions 
\begin{equation}
D\mathbf{x}^T D\mathbf{x} = \mathbf{g}
\label{eq:exact}
\end{equation}
 where $\mathbf{x}:D \rightarrow \mathbb{R}^3$ is the configuration of the center surface.  The other problem is a small-slopes approximation with an in and out of plane decomposition, $\mathbf{x} = i + \epsilon \omega + \epsilon^2 \chi, \mathbf{g} = \mathbf{g}_0 + \epsilon^2 \mathbf{g}_1$ with the isometric immersions given by
\begin{equation}
D \chi^T + D\chi + D\omega^T D\omega = \mathbf{g}_1.
\label{eq:lnrzd}
\end{equation}

In the small slopes approximation, we proved that global minimizers of the bending energy over solutions of (\ref{eq:lnrzd}) are given by a quadratic saddle $\omega(u,v) = \frac{1}{2}(u^2-v^2)$. We thus expect that, with decreasing thickness, the configuration of the sheet will converge to a two wave solution. This is not what is observed experimentally, where the number of waves increases with decreasing thickness \cite{an-experimental-study-of-shape-transitions-and-energy-scaling-in-thin-non-euclidean-plates}. Consequently, the physically realized configurations {\em are not} given by the global minimizers of the model energy. Note however, that the periodic, non-smooth profiles that we construct in Sec.~\ref{sec:small_slopes} do agree qualitatively, and in scaling behavior, with the experimental observations. This is a puzzle, since these periodic, non-smooth profiles {\em are not} global minimizers of the energy, and we need a different selection mechanism if these profiles do indeed represent the experimental configurations.

One way to improve the modeling of the system is to consider the ``full" F\"{o}ppl--von K\'{a}rm\'{a}n energy (\ref{energy}) and the associated isometric immersions given by (\ref{eq:exact}). For complete surfaces of constant negative curvature (or surfaces whose curvature is bounded above by a negative constant), Hilbert's theorem \cite{Hilbert} (resp. Efimov's theorem \cite{Efimov-1964}) show that there are no analytic (resp. $C^2$) solutions of (\ref{eq:exact}). These results are sometimes used as a basis for physical intuition that there are no ``global" isometric immersions of constant negative curvature surfaces, and this leads to the observed refinement of wavelength with decreasing thickness \cite{0295-5075-86-3-34003}. 

We investigated this problem and showed (Sec.~\ref{sec:pseudosphere} and ~\ref{sec:minimax}) there exists smooth isometric immersions $\mathbf{x}:D_R \rightarrow \mathbb{R}^3$ for geodesic disks of arbitrarily large radius $R$. Moreover, in elucidating the connection between this existence result and the non-existence results above, we provided numerical evidence that the maximum principal curvature of such immersions is bounded below by a bound which grows exponentially in $\sqrt{-K_0}R$. 

The existence of smooth, and thus also $W^{2,2}$ immersions of arbitrarily large disks have consequences for the modeling of free non-Euclidean sheets. In particular, the minimizers of the elastic energy (\ref{energy}) must converge as $t \rightarrow 0$ to a minimizer of (\ref{bending-energy-general}) \cite{ERAN,Gamma-limit}, and the energy of the minimizer $\mathbf{x}^*$ is bounded from above by
$$
\mathcal{E}[\mathbf{x}^*] \leq t^2 \inf_{\mathbf{y}\in \mathcal{A}}\mathcal{B}[\mathbf{y}]
$$ 
where $\mathcal{A}$ is the set of all the $W^{2,2}$ solutions of (\ref{eq:exact}). This also implies that the bending energy of the minimizing configurations should satisfy the inequality
$$
\mathcal{B}[\mathbf{x}^*] \leq \mathcal{B}_0 \equiv \inf_{\mathbf{y}\in \mathcal{A}}\mathcal{B}[\mathbf{y}] 
$$
for all $t > 0$, {\em independent of} $t$.  This is an apparent contradiction with experimental observations where the bending energy of the minimizer scales with the thickness $t$ and  is well fit by a power law, $t^{-1}$, which diverges as $t\rightarrow 0$ \cite{an-experimental-study-of-shape-transitions-and-energy-scaling-in-thin-non-euclidean-plates}. This suggests that further improvements of the modeling might be necessary for a quantitative description of the experimental phenomena. 

The smooth immersions $\mathbf{x}:D_R \rightarrow \mathbb{R}^3$ (Secs.~\ref{sec:pseudosphere}~and~\ref{sec:minimax}) do not have the rotational $n$-fold symmetry ($\Psi \rightarrow \Psi + 2 \pi/n$ for the geodesic polar angle $\Psi$) of the underlying energy functional, which is also seen in the experimentally observed configurations. In Sec.~\ref{sec:amsler} We  generalize the construction of the non-smooth, $n$-wave small-slopes solutions to obtain isometric immersions (solutions of (\ref{eq:exact})) which are also non-smooth and have the same symmetry/morphology as the experimentally observed configurations. These periodic Amsler surfaces  {\em have lower bending energy} than the smooth immersions given by subsets of the pseudosphere and hyperboloids with constant negative curvature. 

For each $n \geq 2$, there is a radius $R_n \sim \log(n)$ such that the $n$-periodic Amsler surfaces only exist for a radius $0 < R < R_n$. This gives a natural mechanism for the refinement of the wavelength of the buckling pattern with increasing radius of the disk. However, it does not, at least directly, explain the observed refinement with decreasing thickness. 

We have considered the purely geometric model which arises as a limit $t \rightarrow 0$. A natural question is the applicability if our results for physical sheets where $t >0$. Perhaps the refinement of the wavelength with decreasing $t$  is an effect which can only be captured by studying the full F\"{o}ppl--von K\'{a}rm\'{a}n energy (\ref{energy}). For $t >0$,  we expect that the minimizers of the energy  will have boundary layers near the edge of the disk and also near the singular asymptotic lines, so that the disks satisfy stress and moment balance everywhere. The boundary layers near the outer edge of the disk may then set the wavelength throughout the disk. A similar phenomenon in stretched elastic sheets was studied in this manner in \cite{PhysRevLett.91.086105}. 

The buckling patterns in non-Euclidean sheets are the result of dynamical (albeit slow) processes, and  it may be more appropriate to model this type of differential growth dynamically, perhaps as a gradient flow for the elastic energy. The pattern is then selected for dynamical reasons (fastest growth rate, etc.) and not by global minimization of an energy functional. This might explain why local but not global extrema for the energy functional seem to describe the observed patterns. 

However, neither the boundary layers, nor the gradient flow dynamics,  can explain how the refinement with decreasing thickness can lead to an apparent divergence of the the bending energy $\mathcal{B}[\mathbf{x}]  \sim t^{-1}$.  We have shown rigorously, for the elastic energy (\ref{energy}), in the limit $t \rightarrow 0$, we should have $\mathcal{B}[\mathbf{x}] \rightarrow \mathcal{B}_0 < \infty$.  This suggests the need for better models that are carefully validated  by comparison with experiments in order to obtain a quantitative description of non-Euclidean free sheets. 

We conclude our discussion with a short list of mathematical questions and avenues for future research that come out of this work --
\begin{enumerate}
\item In Sec.~\ref{sec:minimax}, we use an {\em ansatz} motivated by numerical experiments to show that the solutions to the minimax problem
$$
\phi_\infty = \mbox{arg}\min_{\phi \in \mathcal{A}} \max_{x \in D} \cot^2(\phi) 
$$
are given by disks cut out of constant curvature hyperboloids of revolution, where the admissible set $\mathcal{A}$ consists of all $C^2$ or smoother isometric embeddings. We are working on a rigorous (ansatz free)  proof of this result.
\item Our lower bound for the maximum principal curvature (and hence the maximum bending energy density) in Sec.~\ref{sec:minimax} can be interpreted as a lower bound on the $L^{\infty}$ norm of the bending energy density $k_1^2 + k_2^2$ over the admissible set, which consists of isometric immersions. The limiting energy and configuration of the non-Euclidean sheet as $t \rightarrow 0$ is given by the lower bound for the $L^1$ norm of the bending energy density.  Motivated by this observation, we are investigating lower bounds for the $L^p$ norms $1 \leq p < \infty$ to connect the known case $p = \infty$ with  the case of interest $p=1$.
\item Chebyshev nets give a natural discretization of  surfaces with constant negative curvature that respects both the intrinsic and the extrinsic geometry of the surface. This idea has already been used for constructing discrete isometric immersions ($K$-surfaces) for such surfaces \cite{Bobenko-1999}. This can be interpreted as a $t \rightarrow 0$ limit for the variational problem (\ref{energy}).  We are  developing numerical methods for the variational problem given by (\ref{energy}) with $t > 0$ that exploits the Chebyshev net structure. 
\item Our results strongly use the fact that the target metric on the surface has constant negative curvature, which then allows us to naturally associate a Chebyshev net with each isometric immersion through an asymptotic parameterization \cite{gray}. However, we believe that our results, in particular the lower bound for the maximum principal curvature (\ref{eq:sclng}), and the construction of $C^{1,1}$ and piecewise smooth isometric immersions can also be extended to other target metrics whose curvature is bounded above by a negative constant. It is natural to study these questions, in particular the difference between smooth and $C^{1,1}$ immersions  using the ideas in \cite{cmpnstd-cmpctnss}.
\item For the periodic Amsler surfaces in Sec.~\ref{sec:amsler}, the global structure, {\em i.e}. the number $n$ of waves is determined by the local structure at the origin. The origin is a {\em bifurcation point} \cite{cheb-bifur} for the Chebyshev net induced by the embedding in the sense that, every point in the complement of the origin has precisely two asymptotic directions, while the origin has $n$ asymptotic directions with $n > 2$. The authors of \cite{cheb-bifur} remark that for surfaces whose total positive curvature is bounded by $2 \pi$, it is always possible to find a global Chebyshev net, except it can have multiple bifurcation points. This suggests the following natural questions:  Are there non-smooth immersions ($C^{1,1}$ or even $W^{2,2}$ immersions) with multiple bifurcation points? 

The geometry of such surfaces will then serve as a model for the observed morphology in many non-Euclidean sheets including torn plastic \cite{Sharon2002} and lettuce leaves \cite{Theory-of-Edges-of-Leaves} which do not have a globally defined number of waves, but rather have local buckling behavior which increases the number of waves as we approach the boundary.
\end{enumerate}

\noindent \textbf{Acknowledgements:} 
The authors wish to thank Eran Sharon, Efi Efrati and Yael Klein for many useful discussions, and for sharing pre-publication experimental data with us. We would also to thank Marta Lewicka and Reza Pakzad for fruitful discussions. Finally we would like to thank the anonymous reviewer for many useful comments that improved the quality of this paper. This work was supported by the US-Israel BSF grant 2008432 and NSF grant DMS--0807501.

\def\cprime{$'$}

\end{document}